\newtheorem{thm}{Theorem}[section]
\newtheorem{lemma}[thm]{Lemma}
\newtheorem{prop}[thm]{Proposition}
\numberwithin{equation}{section}
\theoremstyle{definition}
\newtheorem{rem}[thm]{Remark}
\newcommand{\bC}{{\mathbb{C}}}
\newcommand{\bN}{{\mathbb{N}}}
\newcommand{\bR}{{\mathbb{R}}}
\newcommand{\bT}{{\mathbb{T}}}
  \newcommand{\A}{{\mathcal{A}}}
  \newcommand{\B}{{\mathcal{B}}}
  \newcommand{\C}{{\mathcal{C}}}
\renewcommand{\H}{{\mathcal{H}}}
  \newcommand{\K}{{\mathcal{K}}}
\renewcommand{\L}{{\mathcal{L}}}
  \newcommand{\N}{{\mathcal{N}}}
\renewcommand{\S}{{\mathcal{S}}}
  \newcommand{\U}{{\mathcal{U}}}
\newcommand{\fS}{{\mathfrak{S}}}
\begin{document}

%\hfill 

\title[The parabolic algebra revisited]{The parabolic algebra revisited}

% \title[The operator algebra  generated by the three semigroups]{The %operator algebra on $L^2(\bR)$ generated by the  translation, %multiplication and dilation semigroups}

% \title[The operator algebra  generated by the three semigroups]{The %operator algebra on $L^2(\bR)$ generated by the  translation, %multiplication and dilation semigroups}

\author[E. Kastis  and S. C. Power]{E. Kastis and S. C. Power}
%\thanks{Supported by EPSRC grant  EP/J008648/1.}
%\emph{Crystal frameworks, almost periodic rigidity}
\address{Dept.\ Math.\ Stats.\\ Lancaster University\\
Lancaster LA1 4YF \\U.K. }

\email{l.kastis@lancaster.ac.uk}
\email{s.power@lancaster.ac.uk}
%\begin{center}

%\thanks{thanks ?}

\thanks{2010 {\it  Mathematics Subject Classification.}
 {47L75, 47L35 }
 \\
Key words and phrases:  operator algebra, semigroups of isometries, synthetic subspace lattice, compact perturbations.\\
This work was supported by the Engineering and Physical Sciences Research Council [EP/P01108X/1]}

\begin{abstract}
The parabolic algebra $\A_p$ is the weakly closed algebra on $L^2(\bR)$ generated by the unitary semigroup of right translations and
the unitary semigroup of multiplication by the analytic
exponential functions $e^{i\lambda x}, \lambda \geq 0$. This algebra is reflexive with an invariant subspace lattice, $Lat \A_p$, which is naturally homeomorphic to the unit disc (Katavolos and Power, 1997). This identification is used here to
classify  strongly irreducible isometric representations of the partial Weyl commutation relations. The notion of a synthetic subspace lattice is extended from commutative to noncommutative lattices and it is shown that $Lat \A_p$ is nonsynthetic relative to the maximal abelian multiplication subalgebra of $\A_p$.
Also, operator algebras derived from isometric representations of $\A_p$ and from compact perturbations are defined and determined.
\end{abstract}
\date{}

\maketitle
%\tableofcontents

\section{Introduction}
Let $ M_\lambda$ and $D_\mu$ be the unitary operators on the Hilbert space
$L^2(\bR)$ given by
\[
M_\lambda f(x) = e^{i\lambda x}f(x),\quad D_\mu f(x) = f(x-\mu)  
\]
where  $\mu, \lambda$ are real.
 As is well-known, the $1$-parameter unitary groups
$\{D_\mu, \mu \in \bR\}$ and $\{M_\lambda, \lambda \in \bR\}$  provide an irreducible representation of the Weyl commutation relations (WCR), $M_\lambda D_\mu
= e^{i\lambda \mu} D_\mu M_\lambda$, and the weak operator topology  closed operator algebra that they generate is the von Neumann algebra  $B(L^2(\bR))$ of all bounded operators. 
See Taylor \cite{tay}, for example. 
On the other hand Katavolos and Power \cite{kat-pow-1} considered the weakly closed operator algebra  generated by the unitary semigroups, for $\mu \geq 0$ and $ \lambda \geq 0$, and showed it to be a proper subalgebra, containing no self-adjoint operators, other than real multiples of the identity, and no nonzero finite rank operators. Moreover this operator algebra, the \emph{parabolic algebra} $\A_p$, was shown to be reflexive, in the sense $\A_p= Alg Lat\A_p$, with the set of invariant subspaces naturally homeomorphic to a closed disc.
However, despite this progress basic algebraic questions remain unanswered, such as the structure of closed ideals and whether zero divisors exist. 

In what follows we revisit the parabolic algebra and its noncommutative  invariant subspace lattice and we examine  associated operator algebras arising from semigroups of isometries 
%Hilbert space representations 
and from compact perturbations. Also, an isometries generalisation of the Stone-von Neumann uniqueness theorem is obtained by making use of the identification of $Lat \A_p$.
%of the invariant subspaces of the parabolic algebra.

Recall that the Stone-von Neumann theorem provides a complete classification of the pairs of strongly continuous unitary groups acting on a separable Hilbert space which satisfy the Weyl commutation relations \cite{sto}, \cite{neu}. Specifically, there is one irreducible class, modelled by translation and multiplication operators on $L^2(\bR)$, and the finite and countable direct sums of this representation determine the other unitary equivalence classes. Rosenberg \cite{ros} has given an interesting historical perspective on the origins of this result, whose strict proof was completed by von Neumann  \cite{neu} in 1931.
It is possibly well-known that a strongly continuous isometric representation of the (partial) Weyl commutations relations for two semigroups of isometries may be dilated to a unique minimal strongly continuous unitary representation of the (full) Weyl commutation relations. A simple proof is given in Theorem \ref{t:dilatingisometric}. However we are not aware of a unitary equivalence class classification for such pairs of semigroups and we obtain partial results here.
 %{\color{cyan}MAYBE OMIT: phrased in the language of Hilbert space isometry representations of the associated Lie semigroup $\mathfrak{W}_+$}. 
We show that the classes which are \emph{strongly irreducible}, in the sense that their unique minimal unitary dilations are irreducible, are  parametrised by the closed unit disc with a boundary point removed.
 
Recall also, that an invariant subspace lattice $\L$ of an operator algebra is a commutative subspace lattice (CSL) if its associated projections form a commuting family. Arveson \cite{arv} has defined such a lattice to be \emph{synthetic} if $Alg \L$ coincides with a certain minimal weak$^*$-closed algebra $\A_{\rm min}$ constructed directly from pseudo-integral operators associated with $\L$. Less technical is the equivalent property that $Alg \L$  is the unique weak$^*$-closed algebra $\A$ such that  $Lat \A=\L$ and $\A$ contains a maximal abelian self-adjoint algebra associated with $\L$. It was shown moreover that this notion of synthesis is
related to sets of spectral synthesis in harmonic analysis, and that CSLs failing to be synthetic could be constructed in terms of sets failing  spectral synthesis. See also Davidson \cite{dav} and Shulman and Turowska \cite{shu-tur}. On the other hand 
the continuous projection nest $\N_v$, for $L^2(\bR)$, and indeed 
any complete projection nest is synthetic.
In Section \ref{s:synthetic} we introduce an analogous notion of synthesis for a noncommutative reflexive subspace lattice $\L$, namely synthesis  relative to a maximal abelian subalgebra of $Alg\L$, and we show that $Lat \A_p$ is not synthetic relative to the maximal abelian subalgebra  $M_{H^\infty(\bR)}$ of $\A_p$.

 In the final two sections
we examine, respectively, the weakly closed operator algebras determined by the restrictions of $\A_p$ to an invariant subspace,
%. In Section \ref{s:quasicompact}  
and the \emph{quasicompact algebra} of $\A_p$, which is the
algebra
% we consider compact perturbations. In  and show that the %\emph{quasicompact algebra} 
\[
Q\A_p = (\A_p +\K)\cap (\A_p^*+ \K).
\]
This is shown to be a C*-algebra which strictly contains
$(\A_p\cap \A_p^*) +\K = \bC I +\K$.

Understanding  the algebraic and geometric  structure of the parabolic algebra presents some interesting challenges and in Section \ref{ss:openprobs} we indicate 4 natural open problems.

\section{The parabolic algebra}\label{s:parabolicalgebra}
We start by recalling basic facts and notation concerning the parabolic algebra, its subspace of Hilbert-Schmidt operators and its invariant subspaces.
 
The Volterra nest  $\mathcal{N}_v$ is the nest of subspaces $L^2([\lambda,+\infty))$, for $\lambda\in\bR$, together with the trivial subspaces $\{0\},L^2(\bR)$. The analytic nest $\N_a$ is the unitarily equivalent nest $F^\ast \N_v$, where $F$ is Fourier-Plancherel transform with
\[
Ff(x)=\frac{1}{\sqrt{2\pi}}\int_\bR f(t)e^{-itx}dt.
\] 
By the Paley-Wiener theorem the analytic nest consists of the chain of subspaces 
\[
e^{isx}H^2(\bR), \quad s \in \bR,
\] 
together with the trivial subspaces. These nests determine  the Volterra nest algebra $\A_v=Alg\N_v$ and the analytic nest algebra $\A_a=Alg\N_a$, both of which are reflexive operator algebras since they have the form $Alg(\S)$, with $\S$ a set of subspaces.

%\subsection{The parabolic algebra}
Define also the reflexive \emph{Fourier binest algebra}
%\begin{align*}
$\A_{FB}= Alg(\mathcal{N}_a\cup\mathcal{N}_v) = \A_a \cap \A_v$.
%\end{align*}
%where $\mathcal{N}_a$ and $\mathcal{N}_v$ are the analytic and Volterra nest, respectively.
The subspace lattice $\mathcal{N}_a\cup\mathcal{N}_v$, which is a continuous complete lattice with noncommuting subspace projections, is known as the \emph{Fourier binest},

 The antisymmetry property $\A_{FB}\cap\A_{FB}^\ast=\bC I$ follows readily, since $\A_v\cap A_v^*$ is the algebra of multiplication operators $M_\phi$ with $\phi \in L^\infty(\bR)$ real-valued, and these operators must leave $H^2(\bR)$ invariant. Also $\A_{FB}$ contains no non-zero finite rank operators. This follows from the structure of such operators in a nest algebra (see Davidson \cite{dav}) and the fact that a pair of proper subspaces from $\N_a$ and $\N_v$ have trivial intersection.

Consider now the \emph{parabolic algebra} $\A_p$. This is the weak operator topology  closed operator algebra generated by the unitary semigroups of operators $\{M_\lambda, \lambda\geq 0\}$ and $\{D_\mu,\,\mu\geq 0\}$. 
Since the generators of $\A_p$ leave the binest invariant, we have 
 $\A_p\subseteq\A_{FB}$. That these two algebras are equal was shown in Katavolos and Power \cite{kat-pow-1}. We show this here by repeating the argument of Levene \cite{lev}. 
 
Write $\C_2$ for the space of Hilbert-Schmidt operators on $\L^2(\bR)$
and recall that every such operator has the form $Int k $ for some square-summable function $k(x,y)$ in $L^2(\bR^2)$ where $Intk$ denotes the Hilbert-Schmidt operator acting on $L^2(\bR)$ given by 
\begin{align*}
(Intk\,f)(x)=\int_\bR k(x,y)f(y)dy.
\end{align*}
The following identification of the Hilbert-Schmidt operators in the Fourier binest algebra is a straightforward argument using the Fourier-Plancherel  transform. The space $H^2(\bR)\otimes L^2(\bR_+)$ is the Hilbert space tensor product.

\begin{prop}\label{p:integralop}
For $k\in L^2(\bR^2)$ let $\Theta_p(k)(x,t) = k(x,x-t)$. Then
\begin{align*}
\A_{FB}\cap \mathcal{C}_2\subseteq\{Intk\,|\,\Theta_p(k)\in H^2(\bR)\otimes L^2(\bR_+)\}
\end{align*}
\end{prop}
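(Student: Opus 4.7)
The plan is to use the decomposition $\A_{FB} = \A_v \cap \A_a$ and prove the corresponding support property for $\Theta_p(k)$ in each factor separately, then intersect. The first, easier, step concerns the Volterra factor: if $Intk \in \A_v \cap \C_2$ then $Intk$ preserves $L^2([\lambda,+\infty))$ for every $\lambda \in \bR$, and the standard characterisation of kernels of integral operators in a continuous nest algebra (cf.\ Davidson \cite{dav}) gives $k(x,y) = 0$ for a.e.\ $(x,y)$ with $y > x$. Under the substitution $t = x - y$ this is precisely $\Theta_p(k)(x,t) = 0$ for a.e.\ $t < 0$, i.e.\ $\Theta_p(k) \in L^2(\bR)\otimes L^2(\bR_+)$.

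The analytic factor is handled by pushing the statement through the Fourier--Plancherel transform. Since $\N_a = F^{*}\N_v$, conjugation gives $\A_a = F^{*}\A_v F$, so if $Intk \in \A_a \cap \C_2$ then $F\cdot Intk\cdot F^{*}$ lies in $\A_v \cap \C_2$. A short computation identifies its kernel as $\hat k(\xi, -\eta)$, where $\hat k$ denotes the two-dimensional Fourier transform of $k$, and the Volterra support condition from the first step then forces $\hat k(\eta,\zeta) = 0$ whenever $\eta + \zeta < 0$. A further Fourier-inversion calculation writes the partial Fourier transform of $\Theta_p(k)$ in its first variable as
\[
(F_x \Theta_p(k))(\alpha,t) = \frac{1}{\sqrt{2\pi}}\int \hat k(\alpha-\zeta,\zeta)\,e^{-it\zeta}\,d\zeta,
\]
and since $(\alpha-\zeta)+\zeta = \alpha$, the integrand vanishes identically when $\alpha < 0$. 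Thus $F_x \Theta_p(k)$ is supported in $\{\alpha \geq 0\}$, and by Paley--Wiener this is equivalent to $\Theta_p(k)(\cdot,t) \in H^2(\bR)$ for a.e.\ $t$, i.e.\ $\Theta_p(k) \in H^2(\bR) \otimes L^2(\bR)$.

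Combining both steps for $Intk \in \A_{FB}\cap \C_2$ gives
\[
\Theta_p(k) \in \bigl(L^2(\bR)\otimes L^2(\bR_+)\bigr) \cap \bigl(H^2(\bR)\otimes L^2(\bR)\bigr) = H^2(\bR)\otimes L^2(\bR_+),
\]
as required. The only real obstacle in this plan is bookkeeping the Fourier sign conventions through the analytic step---one must be careful both when identifying the kernel of $F\,Intk\,F^{*}$ and when taking the partial Fourier transform of $\Theta_p(k)$---but once these are pinned down the vanishing of the integrand above is immediate and the rest is routine.
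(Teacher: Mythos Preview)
Your argument is correct and matches the approach the paper indicates (``a straightforward argument using the Fourier--Plancherel transform''): you split $\A_{FB}=\A_v\cap\A_a$, read off the half-line support of $\Theta_p(k)$ in $t$ from the Volterra triangular-support condition, and obtain the $H^2(\bR)$-condition in $x$ by conjugating the analytic nest to the Volterra nest via $F$. The only caveat is that your displayed integral for $F_x\Theta_p(k)$ is formal for general $L^2$ kernels; it is made rigorous by noting that the linear change of variables $(x,y)\mapsto(x,x-y)$ corresponds on the 2D Fourier side to $(\xi,\zeta)\mapsto(\xi+\zeta,-\zeta)$, so $\widehat{\Theta_p(k)}(\alpha,\beta)=\hat k(\alpha+\beta,-\beta)$ as $L^2$ functions, from which the support claim follows without any distributional computation.
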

Now, given $h\in H^\infty(\bR)\cap H^2(\bR), \phi\in L^1\cap L^2(\bR_+)$, let $h\otimes \phi$ denote the function $(x,y)\mapsto h(x)\phi(y)$. The integral operator $Int k$, that is induced by the function $k=\Theta_p^{-1}(h\otimes\phi)$, lies in the parabolic algebra. Indeed, we have
$Int k=M_h\Delta_\phi$, where $\Delta_\phi$ is the bounded operator  defined by the sesquilinear form
\begin{align*}
\langle\Delta_\phi f,g\rangle=\int_\bR\int_\bR \phi(t)D_tf(x)\overline{g(x)}dxdt, \textrm{ where }f,g\in L^2(\bR).
\end{align*}
Since the linear span of these separated variable
 functions $k$ is dense in $H^2(\bR)\otimes L^2(\bR_+)$, it follows from  Proposition \ref{p:integralop} that 
\begin{align*}
\A_{FB}\cap \mathcal{C}_2\subseteq\{Intk\,|\,\Theta_p(k)\in H^2(\bR)\otimes L^2(\bR_+)\}\subseteq \A_p\cap\mathcal{C}_2\subseteq \A_{FB}\cap \mathcal{C}_2
\end{align*}
and so $\A_{FB}\cap \mathcal{C}_2 =  \A_{FB}\cap \mathcal{C}_2$.

\begin{thm} The parabolic algebra coincides with the Fourier binest algebra and  is a reflexive operator algebra.
\end{thm}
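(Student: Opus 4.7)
The plan is to use the Hilbert--Schmidt identification $\A_{FB}\cap\C_2 = \A_p\cap\C_2$, essentially established in the preceding paragraphs, together with an approximate identity argument to pass from Hilbert--Schmidt operators to the full algebras, and then to observe reflexivity directly. The containment $\A_p \subseteq \A_{FB}$ has already been noted, so the only nontrivial task is to show $\A_{FB} \subseteq \A_p$.

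To that end, I would exhibit a sequence $E_n = M_{h_n}\Delta_{\phi_n}$ in $\A_p \cap \C_2$ with $E_n \to I$ in the strong operator topology. For the convolution factor take $\phi_n \in L^1\cap L^2(\bR_+)$ a nonnegative approximate identity for convolution on the half-line, for instance $\phi_n(t) = n\chi_{[0,1/n]}(t)$; then $\|\Delta_{\phi_n}\| \leq \|\phi_n\|_1 = 1$ and $\Delta_{\phi_n} \to I$ strongly by an elementary continuity-of-translation argument. For the multiplication factor take $h_n(x) = n/(n+ix)$, which lies in $H^\infty(\bR)\cap H^2(\bR)$ with $\|h_n\|_\infty \leq 1$ and $h_n \to 1$ pointwise, so by dominated convergence $M_{h_n} \to I$ strongly. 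By the construction recorded just before the theorem, each $E_n$ is a Hilbert--Schmidt integral operator in $\A_p$, and the boundedness plus strong convergence of each factor give $E_n \to I$ strongly.

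Given $T \in \A_{FB}$, the product $E_n T$ lies in $\A_{FB}$ (since $E_n \in \A_p \subseteq \A_{FB}$) and is Hilbert--Schmidt, as $\C_2$ is a two-sided ideal in $B(L^2(\bR))$; hence $E_n T \in \A_{FB}\cap\C_2 = \A_p\cap\C_2 \subseteq \A_p$. Strong convergence $E_n T \to T$ implies WOT convergence, and the WOT-closedness of $\A_p$ then forces $T \in \A_p$. This yields $\A_p = \A_{FB}$. Reflexivity is now immediate: $\A_{FB} = Alg(\N_a\cup\N_v)$ is by construction the algebra of operators leaving a prescribed family of subspaces invariant, so $\N_a\cup\N_v \subseteq Lat\A_{FB}$ gives $Alg Lat\A_{FB} \subseteq Alg(\N_a\cup\N_v) = \A_{FB}$, while the reverse inclusion $\A_{FB} \subseteq Alg Lat\A_{FB}$ holds trivially. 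The one delicate ingredient is the production of the approximate identity $E_n$ inside $\A_p\cap\C_2$ with the required strong convergence and Hilbert--Schmidt property; once that is at hand, the remainder is a standard ideal/WOT-closure manoeuvre.
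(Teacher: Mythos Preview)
Your approach is exactly the one the paper sketches: build a bounded approximate identity $E_n=M_{h_n}\Delta_{\phi_n}\in\A_p\cap\C_2$, use the equality $\A_{FB}\cap\C_2=\A_p\cap\C_2$ together with the ideal property of $\C_2$ to pull an arbitrary $T\in\A_{FB}$ into $\A_p$ via $E_nT\to T$, and then read off reflexivity from $\A_{FB}=Alg(\N_a\cup\N_v)$. The only slip is the explicit choice $h_n(x)=n/(n+ix)$, which has its pole at $z=in$ in the \emph{upper} half-plane and so lies in $\overline{H^2(\bR)}$ rather than $H^2(\bR)$; replacing it by $h_n(x)=n/(n-ix)$ (pole at $-in$) fixes this without affecting any of the convergence or boundedness claims.
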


\begin{proof}  By considering an appropriate sequence of operators of the form $M_h\Delta_\phi$ it can be shown that
$\A_p$ has a bounded approximate identity of Hilbert-Schmidt operators for the strong operator topology. Since
$\A_p$ and $\A_{FB}$ have the same Hilbert-Schmidt operators a simple density argument completes the proof.
\end{proof}

Let us now consider the invariant subspace lattice 
%$\mathcal{N}_a\cup\mathcal{N}_v$ and 
$Lat \A_p$.
In \cite{kat-pow-1} a cocycle argument with inner functions and unimodular functions on the line is used to obtain the following identification:
\begin{align*}
Lat\A_p=\{K_{\lambda,s}|\lambda\in\bR,s\geq 0\}\cup \mathcal{N}_v
\end{align*}
where $K_{\lambda,s}=M_\lambda M_{\phi_s}H^2(\bR)$ and $\phi_s(x)=e^{-isx^2/2}$. (A similar argument is used in the Section \ref{s:synthetic}.)
Thus for $s>0$ we have the nest 
$\mathcal{N}_s=M_{\phi_s}\mathcal{N}_a$, and for  distinct values of $s\geq 0$ these are disjoint, except for the trivial subspaces.
If we view $Lat\A_p$ as a set of projections endowed with the strong operator topology, then it is homeomorphic to the closed unit disc and the topological boundary is the Fourier binest. 

%\begin{center}
%\includegraphics[scale=0.3]{oniony.jpg}
%\end{center}

\begin{figure}[h!]
\begin{center}
\begin{tikzpicture}
\filldraw [black] (3,6) circle (1pt);
\draw (3,6) node [above]  {$L^2(\bR)$};
\filldraw [black] (3,0) circle (1pt);
\draw (3,0) node [below] {$\{0\}$};
\filldraw [black] (0,3) circle (1pt);
\draw (0,3) node [left]  {$H^2(\bR)$};
\filldraw [black] (6,3) circle (1pt);
\draw (6,3) node [right]  {$L^2(\bR_+)$};
\filldraw [black] (1.65,3) circle (1pt);
\draw (1.65,3) node [right] {$M_{\phi_s}H^2(\bR)$};
\filldraw [black] (0.35,1.59) circle (1pt);
\draw (4,3) node [right] {$s\rightarrow +\infty$};
\draw (0.35,1.59) node [left]  {$M_{\lambda}H^2(\bR)$};
\filldraw [black] (5.65,1.59) circle (1pt);
\draw (5.65,1.59) node [right]  {$D_\mu L^2(\bR_+)$};
\filldraw [black] (1.9,1.59) circle (1pt);
\draw (1.9,1.59) node [right]  {$M_{\phi_s}M_\lambda H^2(\bR)$};
\draw (0.35,4.41) node [above left] {$\N_a$};
\draw (1.9,4.41) node [above left] {$\N_s$};
\draw (5.65,4.41) node [above right] {$\N_v$};
\draw (3,3) circle [radius=3.0];
\draw (3,0) to [out=140,in=220] (3,6);
\end{tikzpicture}
\end{center}
\caption{Parametrising $Lat \A_p$ by the unit disc.}
\label{Latpar}
\end{figure}
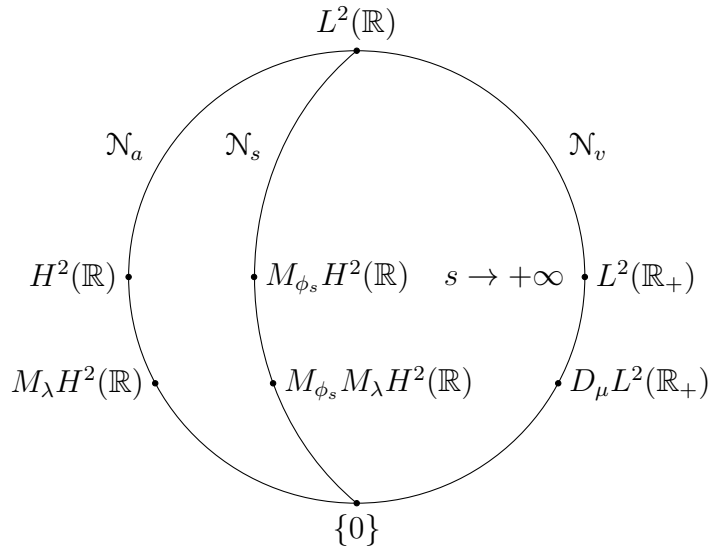

It follows in particular that the Fourier binest lattice is not  reflexive. That is, the lattice $Lat Alg(\N_a\cup\N_b)$ strictly contains $\N_a\cup \N_b$.

We note that similar results have been obtained by Kastis \cite{kas-Lp} for the strong operator topology closed operator algebras on $L^q(\bR)$ generated by the corresponding shift isometries and multiplication isometries of   $L^q(\bR)$, for $1 <q<\infty, q\neq 2$. However, it is not known whether there is a similar homeomorphism between their invariant subspace lattices and the closed unit disc.

\subsection{The operator algebras of $(\lambda, \mu)$-cones} We next consider the operator algebra analogues of $\A_p$ associated with cones in $\bR^2$ in place of the first quadrant cone of the parameters  $\lambda, \mu$.
 
Let $\mathfrak{S}$ be a cone in $\bR^2$, that is, an additive semigroup containing 0 with the additional property that $r(\lambda,\mu)\in\mathfrak{S}$, for all $r>0$ and $(\lambda,\mu)\in\mathfrak{S}$. 
Define $\A_\mathfrak{S}$ to be the $w^*$-closed linear span of the unitaries $U_\lambda V_\mu$, for $(\lambda,\mu)\in\mathfrak{S}$. This is an operator algebra and is equal to $\A_p$ if $\fS= \bR_+^2$.
We now show that for any simple cone $\fS$, that is, one determined by an acute or obtuse angle, we have a unitary equivalence between $\A_\fS$ and  $\A_p$.
%Let now $\{U_\lambda: \lambda\in\bR\}$ and $\{V_\mu:\mu\in\bR\}$ be two groups of unitary operators. Define $\A_\mathfrak{S}$ to be the $w^*$-closed operator algebra generated by $U_\lambda V_\mu$ for $(\lambda,\mu)\in\mathfrak{S}$. In this section we will consider simple cones (acute and the obtuse angle cones), that arise from the multiplication and translation groups. 
The following parametrisation will be convenient. 
With $s_1,s_2\in\bR$ let $\fS_{s_1,s_2}$ be the additive cone for distinct rays, $(x, s_1x), x \geq 0,$ and  $(x, s_2x),x \geq 0$, 
 and let $\A_{s_1,s_2}=\A_{\mathfrak{S}_{s_1,s_2}}$. Also we write  $Ad_Z$ for the map $X \to ZXZ^*$ determined by a unitary operator $Z$ and an associated domain of operators.

\begin{thm} \label{t:conealgebras}
Let $\mathfrak{S}$ be a simple cone in $\bR^2$. Then  $\A_\fS$ is unitarily equivalent to the parabolic algebra.
\end{thm}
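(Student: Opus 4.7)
My plan is to exhibit an explicit unitary $Z$ on $L^2(\bR)$ whose adjoint action $Ad_Z$ maps each generator $M_\lambda D_\mu$ of $\A_\mathfrak{S}$ to a unimodular scalar times a generator $M_{\lambda'}D_{\mu'}$ of $\A_p$, with $(\lambda',\mu') = A(\lambda,\mu)$ for a fixed linear map $A\in SL(2,\bR)$ chosen so that $A\mathfrak{S}=\bR_+^2$. Since scalar factors do not affect $w^*$-closed linear spans, this immediately gives $Z\,\A_\mathfrak{S}\,Z^* = \A_{A\mathfrak{S}}=\A_p$.

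First I would record, by direct computation using the Weyl relation $M_\lambda D_\mu = e^{i\lambda\mu}D_\mu M_\lambda$, the conjugation action of three families of unitaries. \emph{(i) Fourier--Plancherel transform $F$:} $F M_\lambda F^* = D_\lambda$ and $F D_\mu F^* = M_{-\mu}$, giving $F M_\lambda D_\mu F^* = e^{i\lambda\mu}M_{-\mu}D_\lambda$, i.e.\ the rotation $(\lambda,\mu)\mapsto(-\mu,\lambda)$. \emph{(ii) Gaussian multiplier $M_{\phi_s}$} with $\phi_s(x)=e^{-isx^2/2}$: since $\phi_s(x)\overline{\phi_s(x-\mu)}=e^{is\mu^2/2}e^{-is\mu x}$, one finds $M_{\phi_s} M_\lambda D_\mu M_{\phi_s}^* = e^{is\mu^2/2}M_{\lambda-s\mu}D_\mu$, realising the shear $(\lambda,\mu)\mapsto(\lambda-s\mu,\mu)$. \emph{(iii) Dilation $(R_a f)(x)=\sqrt{a}\,f(ax)$, $a>0$:} direct calculation gives $R_a M_\lambda D_\mu R_a^* = M_{a\lambda}D_{\mu/a}$, realising $(\lambda,\mu)\mapsto(a\lambda,\mu/a)$. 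In each case the image of $M_\lambda D_\mu$ is a single operator of the form $c\,M_{\lambda'}D_{\mu'}$ with $|c|=1$, so $Ad_Z$ preserves the $w^*$-closed linear span of such operators and transforms the parameter set by the stated linear map.

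The matrices underlying (i)--(iii) generate $SL(2,\bR)$ (e.g.\ via the Iwasawa or Bruhat decomposition). Given a simple cone $\mathfrak{S}$ with primitive generating rays $\bR_+ v_1,\bR_+ v_2$, relabel so that $\det[v_1,v_2]>0$; then there is a unique $A\in SL(2,\bR)$ sending $v_1$ to a positive multiple of the first standard basis vector and $v_2$ to the appropriate positive multiple of the second, so that $A\mathfrak{S}=\bR_+^2$. Factor $A$ as a product of the matrices arising in (i)--(iii) and take the corresponding product of implementing unitaries in the reverse order to produce $Z$; the previous paragraph then yields $Z\,\A_\mathfrak{S}\,Z^* = \A_p$.

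The principal obstacle is the phase accounting in (i)--(iii): one must verify that each conjugation sends $M_\lambda D_\mu$ to a \emph{scalar} multiple of a \emph{single} operator of the form $M_{\lambda'}D_{\mu'}$, rather than to a nontrivial sum. Once that is confirmed, the theorem reduces to the elementary $SL(2,\bR)$ transitivity on positively oriented bases of $\bR^2$. A more hands-on alternative avoids the $SL(2,\bR)$ language by reducing in stages: first apply a suitable power of $F$ to put both rays of $\mathfrak{S}$ in the right half-plane (so $\mathfrak{S}=\mathfrak{S}_{s_1,s_2}$); then use one shear $M_{\phi_s}$ and, if necessary, a quarter rotation $F$ and a dilation $R_a$ to bring the resulting cone exactly to $\bR_+^2$.
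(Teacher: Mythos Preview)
Your argument is correct and is close in spirit to the paper's proof, but the organisation is genuinely different and worth noting.

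The paper proceeds by a direct two-step reduction. First it uses $Ad_F$ to assume one extreme ray is the positive $\lambda$-axis, reducing to cones $\mathfrak{S}_{0,s}$ or $\mathfrak{S}_{s_1,s_2}$ with positive slopes. It then shows, by the same Gaussian-chirp computation you record, that $Ad_{M_{\phi_s}}$ implements the shear $(\lambda,\mu)\mapsto(\lambda-s\mu,\mu)$, taking $\A_s$ to $\A_p$; and that the Fourier-conjugate unitary $D_{\phi_{s^{-1}}}^*$ implements the complementary shear $(\lambda,\mu)\mapsto(\lambda,\mu-s^{-1}\lambda)$, taking $\A_{s_1,s_2}$ to $\A_{s_1-s_2}$. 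Thus the paper works entirely with two shears along the coordinate axes (plus the Fourier rotation), never invoking dilations or any abstract $SL(2,\bR)$ statement.

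Your version replaces the second shear by the dilation family $R_a$ and then appeals once and for all to the fact that rotation by $\pi/2$, upper-triangular shears, and positive diagonal matrices generate $SL(2,\bR)$, together with transitivity of $SL(2,\bR)$ on positively oriented ordered bases of $\bR^2$. This is a cleaner structural explanation---you are in effect identifying the unitaries $F$, $M_{\phi_s}$, $R_a$ as lifts of Iwasawa generators under the metaplectic representation---and it handles every simple cone uniformly without the case split $\mathfrak{S}_{0,s}$ versus $\mathfrak{S}_{s_1,s_2}$. The paper's approach, by contrast, stays elementary and avoids the (mild) appeal to generation of $SL(2,\bR)$, at the cost of a slightly ad hoc two-stage reduction. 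Your final paragraph's ``hands-on alternative'' is essentially the paper's route.
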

\begin{proof}
%Let $s,s_1,s_2\in\bR^\ast$. 
Since $Ad_F D_\mu={M}_{-\mu}$, without loss of generality we consider cones $\mathfrak{S}_{0,s}$ and $\mathfrak{S}_{s_1,s_2}$, with $s,s_1>0$.
In particular, it suffices to prove that the corresponding algebras $\A_{s}$ and $\A_{s_1,s_2}$ (see Figure~\ref{fig2}) are unitarily equivalent to the parabolic algebra.
\begin{figure}[h]
\begin{center}
\begin{tikzpicture}
\path [fill=cyan] (1.75,1) to (3.5,1) -- (3.5,1) to (3.5,2.5) -- (3.5,2.5) to (1.75,2.5) --(1.75,2.5) to (1.75,1);
\draw [->][thick] (1.75,1) -- (3.5,1);
\draw [<-] (0,1) -- (1.75,1);
\draw (3.3,1) node [below] {$M_\lambda$};
\draw [<-] (1.75,0) -- (1.75,1);
\draw [->][thick] (1.75,1) -- (1.75,2.5);
\draw (1.75, 2.4) node [left] {$D_\mu$};
\draw (2.5,2) node [right] {$\A_p$};

\path [fill=cyan] (6.75,1) to (8.5,1) -- (8.5,1) to (8.5,2.1) -- (8.5,2.1) to (6.75,1);
\draw [<-] (5,1)--(6.75,1);
\draw [->] [thick] (6.75,1) -- (8.5,1);
\draw [->] [thick] (6.75,1) -- (8.5,2.1);
\draw (8.3,1) node [below] {$M_\lambda$};
\draw [<->] (6.75,0) -- (6.75,2.5);
\draw (6.75, 2.4) node [left] {$D_\mu$};
\draw (7.75,1) node [above right] {$\A_s$};
\draw [dashed] (7.25,1) -- (7.25,1.3);
\draw (7.25,1) node [below] {$_{M_1}$};
\draw [dashed] (7.25,1.3) -- (6.75,1.3);
\draw (6.75,1.3) node [left] {$_{D_s}$};

\path [fill=cyan] (11.75,1) to (12.5,2.5) -- (12.5,2.5)  to (10,2.5) -- (10,2.5) to (10,1.5) -- (10,1.5) to (11.75,1);
\draw [->] [thick] (11.75,1) -- (12.5,2.5);
\draw [->] [thick] (11.75,1) -- (10,1.5);
\draw [<->] (10,1) -- (13.5,1);
\draw (13.3,1) node [below] {$M_\lambda$};
\draw [<->] (11.75,0) -- (11.75,2.5);
\draw (11.75, 2.4) node [left] {$D_\mu$};
\draw (11,2) node [left] {$\A_{s_1,s_2}$};

\draw [dashed] (12.25,1) -- (12.25,2);
\draw (12.25,1) node [below] {$_{M_1}$};
\draw [dashed] (11.75,2) -- (12.25,2);
\draw (11.75,2) node [left] {$_{D_{s_1}}$};

\draw [dashed] (11.25,1) -- (11.25,1.15);
\draw (11.25,1) node [below] {$_{M_{-1}}$};
\draw [dashed] (11.75,1.15) -- (11.25,1.15);
\draw (11.75,1.15) node [above left] {$_{D_{s_2}}$};

\end{tikzpicture}
\end{center}
\caption{From the left to the right : The parabolic algebra $\A_p$, the algebra $\A_{s}$ and the algebra $\A_{s_1,s_2}$}
\label{fig2}
\end{figure}
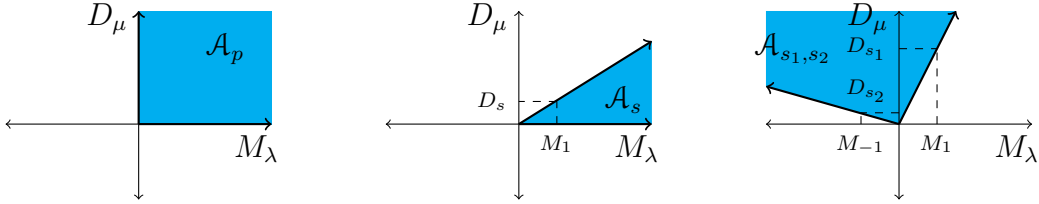

Let $s\in\bR$. Given $\mu\in\bR$ and $f\in L^2(\bR)$, compute
\begin{align*}
(D_\mu M_{\phi_s}f)(x)&=(M_{\phi_s}f)(x-\mu)=
e^{-is(x-\mu)^2/2}f(x-\mu)\\&=
e^{-is\mu^2/2}e^{-isx^2/2}e^{is\mu x }f(x-\mu)=
e^{-is\mu^2/2}(M_{\phi_s}M_{s\mu}D_\mu f)(x),
\end{align*}
which implies 
\begin{equation}\label{eq1}
Ad_{M_{\phi_s}}(M_{s\mu}D_\mu)=e^{-is\mu^2/2} D_\mu.
\end{equation}

Also, applying $Ad_F$, we get
\begin{align*}
Ad_{D_{\phi_s}}(D_{s\mu}M_{-\mu})=e^{-is\mu^2/2} M_{-\mu}
\end{align*}
or equivalently
\begin{equation}\label{eq2}
Ad_{D_{\phi_{s^{-1}}}^\ast}(D_{\mu}M_{s\mu})=e^{-is\mu^2/2} M_{s\mu}.
\end{equation}
Thus, $\A_s$ is unitarily equivalent to the parabolic algebra, since, by equation~\eqref{eq1}, the map $Ad_{M_{\phi_s}}$ sends $\A_s$ onto $\A_p$. In the general case, let $s_1>s_2$. Then, applying the formula~\eqref{eq2}, the restriction of the map $Ad_Z$, with $Z={D_{\phi_{s_2^{-1}}}^\ast}$, to the domain $\A_{s_1,s_2}$, is an isomorphism onto $\A_{s_1-s_2}$, and so the theorem follows.  
\end{proof}

\section{Synthetic lattices and Lat$\A_p$.}\label{s:synthetic} 
Let $\L$ be a reflexive subspace lattice in the usual sense of Halmos, so that $\L= Lat Alg \L$, and let $\B$ be a maximal abelian subalgebra of $Alg \L$. Then we define $\L$ to be \emph{synthetic relative to  $\B$}  if $Alg \L$ is the unique weak$^*$-closed operator algebra $\A$ with  $Lat \A = \L$ and $\B\subseteq \A$. 

As we have remarked in the introduction, in the case of a commutative subspace lattice $\L$ being synthetic relative to a maximal abelian self-adjoint subalgebra of $Alg \L$
is equivalent to Arveson's more technical definition of a synthetic CSL lattice. In this formulation $Alg \L$ must coincide with a certain \emph{minimal algebra} determined by so-called pseudo-integral operators constructed with the help of a coordinatisation of $\L$. These generating operators play a role analogous to the Hilbert-Schmidt operators in a nest algebra. 

The parabolic algebra is known to be the weak$^*$-closure of its Hilbert-Schmidt integral operators \cite{kat-pow-1}. Despite this regularity property we now show that its invariant subspace lattice  fails to be synthetic relative to the maximal abelian algebra of analytic multiplication operators.

\begin{thm}\label{t:notsynthetic}
The subspace lattice $Lat \A_p$ is not synthetic relative to $M_{H^\infty(\bR)}.$
\end{thm}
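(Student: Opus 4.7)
Plan. The goal is to exhibit a proper weak$^*$-closed subalgebra $\A\subsetneq\A_p$ with $M_{H^\infty(\bR)}\subseteq\A$ and $Lat\,\A=Lat\,\A_p$; such an $\A$ witnesses non-synthesis.

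A first naive candidate, $\A_c:=\overline{\mathrm{alg}(M_{H^\infty(\bR)},\{D_\mu:\mu\geq c\})}^{w^*}$ for $c>0$, does \emph{not} work: any measurable $E\subseteq\bR$ with $E+c\subseteq E$ that is not a half-line (for instance $E=\bigcup_{n\geq 0}[nc,nc+1]$ when $c>1$) gives $L^2(E)\in Lat\,\A_c\setminus Lat\,\A_p$, so $Lat\,\A_c\supsetneq Lat\,\A_p$. The defect is that $\A_c$ lacks operators mixing translations densely enough to preclude such gappy $E$. The remedy is to replace the restricted translation semigroup by a single \emph{smoothed} translation $\Delta_{\psi_0}=\int_0^\infty\psi_0(t)\,D_t\,dt$, where $\psi_0\in L^1\cap L^2(\bR_+)$ is chosen with full support on $[0,\infty)$ and Fourier transform $\widehat{\psi_0}$ zero-free on the Gelfand spectrum of the convolution Banach algebra $L^1(\bR_+)$ (a Wiener generator). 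Set
\[
\A:=\overline{\mathrm{alg}(M_{H^\infty(\bR)},\Delta_{\psi_0})}^{w^*}.
\]
Because $\Delta_{\psi_0}f=\psi_0*f$ smears supports over $[0,\infty)$, the subspaces $L^2(E)$ above are no longer $\Delta_{\psi_0}$-invariant.

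For $Lat\,\A=Lat\,\A_p$ (the inclusion $\supseteq$ is immediate from $\A\subseteq\A_p$), let $K\in Lat\,\A$, so $M_{H^\infty(\bR)}K\subseteq K$ and $\Delta_{\psi_0}K\subseteq K$. The plan is to invoke the Wiener Tauberian theorem for $L^1(\bR_+)$: zero-freeness of $\widehat{\psi_0}$ on the Gelfand spectrum forces the closed convolution ideal generated by $\psi_0$ to be all of $L^1(\bR_+)$. Combined with continuity of $\phi\mapsto\Delta_\phi$ from $L^1(\bR_+)$ into $B(L^2(\bR))$, the closure of $K$, and the rich $M_{H^\infty(\bR)}$-module structure of $\A$ (furnishing additional $K$-preserving operators $M_h\Delta_{\psi_0}M_{h'}$ and their iterates), this should lift $\Delta_{\psi_0}$-invariance of $K$ to $\Delta_\phi$-invariance for every $\phi\in L^1(\bR_+)$, and thence by strong continuity to $D_\mu$-invariance for every $\mu\geq 0$. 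Together with $M_{H^\infty(\bR)}$-invariance, this places $K$ in the Katavolos--Power list $\{K_{\lambda,s}\}\cup\mathcal{N}_v=Lat\,\A_p$.

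For $\A\subsetneq\A_p$: via Fourier conjugation, $F\A F^{-1}$ is generated by the translation semigroup $\{D_\lambda:\lambda\geq 0\}$ together with the single multiplication $M_{\widehat{\psi_0}}$. For the concrete example $\psi_0(t)=e^{-t}\mathbf{1}_{[0,\infty)}(t)$ with $\widehat{\psi_0}(\xi)=1/(1+i\xi)$, a Fourier-side analysis shows the resulting weak$^*$-closed algebra cannot produce multiplications by $e^{-i\mu\xi}$ for generic $\mu>0$, whence $D_\mu\notin\A$ and $\A$ is strictly smaller than $\A_p$. The main obstacle is the Wiener Tauberian step in the lattice equality: direct iteration of $\Delta_{\psi_0}$ alone yields only invariance under convolution powers $\Delta_{\psi_0^{*k}}$, whose span need not exhaust the closed convolution ideal generated by $\psi_0$. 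Bridging this gap is the delicate technical point and will require a careful combination of the Wiener density statement with the $M_{H^\infty(\bR)}$-module structure of $\A$, for example by showing that the H-S operators $M_h\Delta_{\psi_0}M_{h'}\in\A$, as $h,h'$ range over $H^\infty(\bR)$, have $\Theta_p$-images that span enough of $H^2(\bR)\otimes L^2(\bR_+)$ for the Wiener-type lifting of invariance to go through.
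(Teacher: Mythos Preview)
Your proposed algebra $\A$ is not a proper subalgebra of $\A_p$; in fact $\A=\A_p$, so the construction cannot witness non-synthesis. The reason is that your single generator $\Delta_{\psi_0}$ with $\psi_0(t)=e^{-t}\mathbf{1}_{[0,\infty)}$ already produces, via its powers, enough Hilbert--Schmidt operators to recover all of $\A_p$. Indeed $\Delta_{\psi_0}^k=\Delta_{\psi_0^{*k}}$ with $\psi_0^{*k}(t)=\frac{t^{k-1}}{(k-1)!}e^{-t}$, and the family $\{t^{k-1}e^{-t}:k\geq 1\}$ is total in $L^2(\bR_+)$ (a Laplace-transform/moment argument: if $f\in L^2(\bR_+)$ is orthogonal to each $t^{k-1}e^{-t}$ then $H(s)=\int_0^\infty \overline{f(t)}e^{-(1+s)t}\,dt$ is analytic for $\operatorname{Re} s>-1$ with all derivatives vanishing at $0$, hence $H\equiv 0$ and $f=0$). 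Consequently the operators $M_h\Delta_{\psi_0^{*k}}\in\A$, for $h\in H^\infty\cap H^2$ and $k\geq 1$, have $\Theta_p$-images $h\otimes\psi_0^{*k}$ spanning a dense subset of $H^2(\bR)\otimes L^2(\bR_+)$. Since Hilbert--Schmidt norm convergence implies operator-norm and hence weak$^*$ convergence, and $\A$ is weak$^*$-closed, it follows from Proposition~\ref{p:integralop} that $\A\supseteq\A_p\cap\C_2$. But $\A_p$ is the weak$^*$-closure of $\A_p\cap\C_2$ (it has a bounded SOT-approximate identity of Hilbert--Schmidt operators), so $\A\supseteq\A_p$ and therefore $\A=\A_p$. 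Your claimed Fourier-side separation $D_\mu\notin\A$ is thus false; the ``obvious'' obstruction disappears once one remembers that weak$^*$ closure is being taken.

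There is an irony here: you flagged the lattice equality $Lat\,\A=Lat\,\A_p$ as the delicate step and treated the strict containment $\A\subsetneq\A_p$ as routine, but it is exactly the other way round---the lattice equality is automatic because $\A=\A_p$, and the strict containment fails. Your earlier rejected candidate $\A_c$ had the opposite defect. The paper's construction threads this needle by taking the weak$^*$-closure of the algebra generated by $\{M_\lambda:\lambda\geq 0\}$ together with only those products $M_\lambda D_\mu$ for which $\lambda\geq 2$ or $\mu\geq 2$: removing a bounded ``corner'' in the $(\lambda,\mu)$-parameter space is enough to exclude $D_1$ by an explicit weak$^*$-functional separation argument, yet the surviving generators still contain $M_\lambda D_\mu$ for \emph{all} $\mu\geq 0$ (once $\lambda\geq 2$), so the inner-function cocycle argument for $Lat\,\A_p$ goes through unchanged.
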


\begin{proof}
Let $\A_0$ be the subalgebra of $\A_p$  generated by
the operators $M_\lambda$, for $\lambda\geq 0$, and the products $M_\lambda D_\mu$ where $\lambda \geq 2$ (and $\mu\geq 0$) or $\mu\geq 2$ (and $\lambda\geq 0$), and let $\A$ be the weak operator topology closure of $\A_0$.
%\[
%\{M_\lambda : \lambda\geq 0\}\cup
%\{M_\lambda D_\mu: \lambda\geq 2,\mu\geq 0\}\cup 
%\{M_\lambda D_\mu: \lambda\geq 0,\mu\geq 2\}
%\]
In the first part of the proof we show that $Lat \A = Lat \A_p$ by means of the  inner function cocyle argument similar to that used for the determination of $Lat \A_p$ in \cite{kat-pow-1}. In the second part of the proof we show that $\A \neq \A_p$.
%use
%a Hahn-Banach separation argument to show that $D_1\notin \A$.

Let $K \in Lat \A$.
Since $M_\lambda K\subseteq K$ for all $\lambda\geq 0$, it follows from Beurling's theorem that $K=uH^2(\bR)$, for some unimodular function $u$, or $K=L^2(E)$, where $E$ is a Borel set in $\bR$.
If $K=L^2(E)$ then, since it is an invariant subspace for  $D_\mu M_\lambda$, for $\mu\geq 0, \lambda \geq 2$, it follows that $E=[t,\infty)$, for some $t\in \bR$. 
% finish this $\A_p$ section with a list of problems with brief discussion: divisors of zero, ideals, Jacobson radical, operator synthetic (with intro text above Arveson etc), hyperreflexive, factoring, ... and maybe more references.}

Suppose now that $K=uH^2(\bR)$. Fix $\lambda >2$. Then $M_\lambda D_\mu$ is in $\A_0$, for all $\lambda\geq 0$, and  $ M_\lambda D_\mu K\subseteq K$. {Also $M_\lambda D_\mu  K$ is invariant under $M_{\lambda'}$, for all $\lambda'\geq 0$}, and so, by Beurling's theorem, $D_\mu M_\lambda  K= \omega_{\mu}u H^2(\bR)$, for some unimodular function $\omega_{\mu}$. 
Note that $\omega_{\mu_2}$ divides $\omega_{\mu_1}$ for all $0<\mu_2<\mu_1$. 
Moreover, calculating directly we have $D_\mu M_\lambda  K=u(x-\mu)e^{i\lambda x}H^2(\bR)$. Hence we obtain $u(x-\mu)e^{i\lambda x}= c_{\mu} \omega_{\mu} u(x)$, for some unimodular constant. Redefining  $\omega_{\mu}$ we may assume that $c_{\mu}=1$. Thus
\[\omega_{\mu}=\frac{ u(x-\mu)}{u(x)} e^{i\lambda x}.\]
Therefore, we get the cocycle equation
\[\omega_{\mu_1 +\mu_2}=\frac{u(x-\mu_1-\mu_2)}{u(x-\mu_1)} \frac{u(x-\mu_1)}{u(x)} e^{i\lambda x}=\omega_{\mu_2}(x-\mu_1) \omega_{\mu_2}(x) e^{-i\lambda (x-\mu_1)}.\]
Thus $\omega_{\mu_2}(x-r)$ divides $\omega_{\mu_1}$ for all $0<r<\mu_1-\mu_2$. Fix $\mu_1$ and $\mu_2$ with $0<\mu_2<\mu_1$. If $\omega_{\mu_2}$ has any zeros in the upper half plane, then those zeros and all their translates by $r$, with $0<r<\mu_1-\mu_2$, must be zeros of $\omega_{\mu_1}$. However, this would imply that the analytic function $\omega_{\mu_1}$ is identically zero, a contradiction, so $\omega_{\mu_2}$ admits a trivial Blaschke product.
Hence $\omega_{\mu_2}$ can be writen in the form
\[\omega_{\mu_2}(z)= \alpha\, e^{i\beta z} \exp\left\{i\int_\bR \frac{sz+1}{s-z}\frac{1}{s^2+1} d\mu(s)\right\}, \,\operatorname{Im}z>0,\]
for some unimodular $\alpha$, some real $\beta$ and some singular measure $\mu$.
Let $\omega_{\mu_1}$ be associated with the triple $\alpha',\beta'$ and $\nu$. Again,  $\omega_{\mu_2}(x-r)$  divides $\omega_{\mu_1}$, which yields that all the translates of $\mu$ by $r$, with $0<r<\mu_1-\mu_2$, are dominated by the singular measure $\nu$, and so $\mu=0$. Thus 
\[\omega_{\mu_2}(x)=\alpha_{\mu_2}e^{i\beta(\mu_2)x},\]
where $\alpha$ is unimodular and $\beta$ is strictly increasing. By the cocycle equation, we have 
\[\alpha(\mu_1+\mu_2) e^{i\beta(\mu_1+\mu_2)x}=\alpha(\mu_2)e^{i\beta(\mu_2)(x-\mu_1)}
\alpha(\mu_2)e^{i\beta(\mu_2)x} e^{-i\lambda(x-\mu_1)},\]
which implies that  $\alpha(\mu_1+\mu_2)=\alpha(\mu_1)\alpha(\mu_2)e^{-i\beta(\mu_2)\mu_1}e^{i\lambda \mu_1}$ and $\beta(\mu_1+\mu_2)=\beta(\mu_1)+\beta(\mu_2)-\lambda$. Thus $\beta(\mu_2)=\rho \mu_2+\lambda$, for some $\rho>0$. Therefore, $\alpha(\mu_2)=e^{i\sigma \mu_2}e^{-i\rho \mu_2^2/2}$. Hence
\[\omega_{\mu_2}=e^{i\sigma \mu_2}e^{-i\rho \mu_2^2/2}e^{i(\rho\mu_2+\lambda)x}=\frac{u(x-\mu_2)}{u(x)}e^{i\lambda x}.\]
Therefore, fixing $x=x_0$ we have
\[u(x_0-\mu_2)=u(x_0) e^{i(-\rho \mu_2^2/2+\sigma\mu_2+\rho\mu_2 x_0)}.\]
Thus
\[u(y)=c e^{i(-\rho y^2/2+\sigma\mu_2)}\]
and the first part of the proof is complete.

%To see that $\A\neq \A_p$ we show that $D_\mu\notin \A$ for some $0<\mu<2$. Recall that the containing Volterra nest algebra $\A_v$ can be viewed naturally as an operator algebra with "lower triangular support". More precisely, let us define the support of an operator $T\in B(L^2(\bR))$, relative to the Volterra nest, as the closure of the set of points $(x,y)\in \bR^2$ such that $\|PTQ\|>0$ whenever $P$ (resp. $Q$) is the orthogonal projection of an interval in $\bR$ containing $x$ (resp. $y$). Also define the support of a space of operators as the closed union of the supports of its elements.

Note that $\A$ is also the closure of the subalgebra 
$M_{H^\infty(\bR)}+M_2\A_p +D_2\A_p$ and this is contained in
%$\A_p$ which in turn is contained in 
$\A_v$, the Volterra  nest algebra of operators with ``lower triangular support". The following rough argument based on support sets shows that  $\A \neq \A_p$ and provides insight for the more explicit separation argument below, showing that $D_1 \notin \A_p$.
%which is based on support sets in $\bR^2$ of operators and spaces of operators.
The support of $D_\mu$, with $0<\mu<2$, is a line parallel to the main diagonal. This main diagonal is the support set for $M_{H^\infty(\bR)}$ and the support set for $D_2\A_p$ is empty "above" the support for $D_2$. It follows from this that if $D_\mu$ is in $\A$ then it must be  in the weak operator topology closure of 
$M_2\A_p$. Since $M_2\A_p$ is closed in this topology it  follows that $D_\mu\in M_2\A_p$. However, this cannot be true for all such $\mu$ since this implies that $I\in M_2\A_p$, a contradiction.

Let $f_n,g_n\in L^2[0,2]$, with $\|f_n\|_2 = \|g_n\|_2 =1$, let 
$(\alpha_n)$ be a summable sequence, and define the associated  w$^*$-continuous functional $\omega$ on $B(L^2(\bR))$ with
\[\omega(T)=\sum_{n\in\bN}\alpha_n\langle TM_{\phi_1}F^{-1}f_n,M_{\phi_1}F^{-1}g_n\rangle.
\]
%Note also that for $f_m \in H^\infty(\bR)$ we have the commutation relations  
%\[
%M_{f_m}D_{\mu_m}
%(M_{\phi_1}F^{-1})= 
%(M_{\phi_1}F^{-1})e^{-m^2/2}D_{f_m}D_{\mu_m}M_{-\mu_m},
%\]
%and also
% $M_h(M_{\phi_1}F^{-1})= 
%(M_{\phi_1}F^{-1})D_h$ and
%$D_1
%(M_{\phi_1}F^{-1})= 
%(M_{\phi_1}F^{-1})e^{i/2}D_1M_{-1}$.

Consider now a finite sum $\sum_m c_m M_{\lambda_m}D_{\mu_m}$ in $\A_0$, with $\lambda_m+\mu_m\geq 2$ for every $m$, and $h\in H^\infty(\bR)$.
%, 
%\[
%alg(
%\{M_\lambda D_\mu: \lambda\geq 2,\mu\geq 0\}\cup 
%\{M_\lambda D_\mu: \lambda\geq 0,\mu\geq 2\})
%\]
We have
\begin{align*}
\omega(M_{h}+ \sum_m c_m M_{\lambda_m}D_{\mu_m}+D_1)&=
\sum_{n\in\bN}\alpha_n\langle (M_{h}+ \sum_m c_mM_{\lambda_m}D_{\mu_m}+D_1)
M_{\phi_1}F^{-1}f_n,M_{\phi_1}F^{-1}g_n\rangle
\end{align*}
\begin{align*}
{\color{white}xxxxxx}&=\sum_{n\in\bN}\alpha_n\langle M_{\phi_1}F^{-1}(D_{h}+ \sum_m c_me^{-m^2/2}D_{\lambda_m}D_{\mu_m}M_{-\mu_m}+e^{i/2}D_1M_{-1})
f_n, M_{\phi_1}F^{-1}g_n\rangle\\
&=\sum_{n\in\bN}\alpha_n\langle M_{\phi_1}F^{-1}(D_{h}+ \sum_m c_me^{-m^2/2}D_{\lambda_m + \mu_m}M_{-\mu_m}+e^{i/2}D_1M_{-1})
f_n, M_{\phi_1}F^{-1}g_n\rangle\\
&= \sum_{n\in\bN}\alpha_n\langle (D_h+e^{-i/2}D_1M_{-1})f_n,g_n\rangle,
\end{align*}
{The last equality follows from the fact that for all $f,g\in L^2[0,2]$ we have $\langle D_\lambda f, g\rangle=0$, when $\lambda\geq 2$.
}

We may write the previous equality as
\[
\omega(M_{h}+ \sum_m c_mM_{\lambda_m}D_{\mu_n}+D_1)=\omega_{[0,2]}(D_h+e^{-i/2}D_1M_{-1})
\]
where $\omega_{[0,2]}$ denotes the $w^*$-continuous functional on $B(L^2[0,2])$ determined by the sequences $(f_n) $ and $(g_n)$, and we note that every  $w^*$-continuous functional on $B(L^2[0,2])$ is of this form.

Now the operator $D_1M_{-1}$ does not lie in the $w^*$-closure of the compression of $\{D_h:h\in H^\infty(\bR)\}$ to $L^2[0,2]$. Indeed, the latter algebra is commutative while 
the compression of $D_1M_{-1}$ does not commute with the compression of  $D_{1/2}$.
%\[
%D_\frac{3}{2}M_{-1}|_{[0,2]}\neq M_{-1}D_\frac{3}{2}|_{[0,2]}.
%\]
Thus, by the topological Hahn-Banach theorem, there exists a $w^*$-continuous 
functional $\omega_{[0,2]}$ on $B(L^2[0,2])$, such that $|\omega_{[0,2]} (e^{-i/2}D_1M_{-1})|=1$, while $\omega_{
[0,2]}(D_h)=0$ for every $h\in H^\infty(\bR)$. However, from the calculations  above, this implies that there  
exists a $w^*$-continuous functional $\omega$ on $B(L^2(\bR))$, such that $\omega(M_{h}+ \sum_m a_mM_{\lambda_m}D
_{\mu_n})=0$ and $|\omega(D_1)|=1$. Thus  
$D_1\notin \A$ and the proof is complete.
\end{proof}

\begin{rem}
While synthesis relative to a maximal abelian algebra generalises the CSL notion of synthetic we note that for a noncommutative lattice $\L$ there may be no counterpart to the notion of a minimal weak$^*$-closed subalgebra. To see this consider the subalgebras
$\A_t= M_{H^\infty(\bR)}+M_t\A_p +D_t\A_p$, for $t>0$, which form a decreasing chain of weak$^*$-closed subalgebras with intersection equal to $M_{H^\infty(\bR)}$. The proof above shows that  $Lat \A_t= Lat \A_p$, for all $t$, and yet $Lat M_{H^\infty(\bR)}\neq Lat \A_p$.

We remark that the stronger synthesis property for a reflexive lattice $\L$, which requires that $\A= Alg\L$ for \emph{every} weak$^*$-closed unital subalgebra $\A$  with  $Lat \A = Alg \L$ is a distinctly stronger notion. Indeed, the discrete nest and the Volterra nest fail to have this uniqueness property since these lattices can be attained by a single operator, and hence by an abelian  weak$^*$-closed unital subalgebra. See \cite{dav}, \cite{rad-ros} for such unicellular operators. 

On the other hand we remark that 
$H^\infty(\bR)$, as an operator algebra on $L^2(\bR)$, or even as the Toeplitz operator algebra on $H^2(\bR)$, does have this property by virtue of being \emph{hereditarily reflexive}. This in turn is a consequence of the fact that dual space functionals are implementable by rank one operators. See, for example, Davidson \cite{dav-distance} and Hadwin \cite{had}.
\end{rem}

\section{Isometric Representations of the WCR}\label{s:repsOfWCR}
Let us define an \emph{isometric representation} of the Weyl commutation relations to be a pair of 
strongly continuous semigroups of isometries $U_\lambda, V_\mu, \lambda,\mu \geq 0$, acting on a separable Hilbert space $\mathcal{K}$,
with
%which  satisfy the (partial) Weyl commutation relations, 
\[
U_\lambda V_\mu = e^{i\lambda\mu}V_\mu U_\lambda, \quad \lambda, \mu \geq 0.
\]
%Hilbert spaces will be assumed to be complex and separable.
%For $\lambda, \mu \in \bR$ let $M_\lambda$ be the multiplication operator on $L^2(\bR)$ determined by the exponential functions $e^{i \lambda x}$, and let $D_\mu$ be the right translation operator on $L^2(\bR)$. Then $M_\lambda D_\mu = e^{i\lambda\mu}D_\mu M_\lambda$.
%===========
%PUT ELSEWHERE with wider comments ?
% Let $\mathfrak{W}_+$ (resp. $\mathfrak{W}$) be the Lie semigroup (resp. Lie group) of triples $(\lambda,\mu, \theta)$ with $\lambda, \mu \geq 0$ (resp. $\lambda, \mu \in \bR$) and $e^{i\theta} \in S^1$, with the usual Euclidean topology and with the group product defined by means of the bijection with the unitary operators $e^{i\theta}M_\lambda D_\mu$. 
%==========
%An \emph{isometric representation} of $\mathfrak{W}_+$ by isometries on a Hilbert space $\mathcal{K}$ corresponds to a pair of strongly continuous semigroups of isometries on $\mathcal{K}$, $U_\lambda, V_\mu, \lambda,\mu \geq 0$, which  satisfy the (partial) Weyl commutation relations, 
%\[
%U_\lambda V_\mu = e^{i\lambda\mu}V_\mu U_\lambda, \quad \lambda, \mu %\geq 0,
%\]
%A Hilbert space representation of a Lie semigroup, or of a Lie group, 
An isometric representation is \emph{irreducible} if there is no proper reducing closed subspace for the representation, and we say that it is \emph{strongly irreducible} if it has a minimal unitary dilation on a separable Hilbert space which is irreducible.

Let $\rho_{\lambda, s}$ be the isometric representation arising from
the restriction of the multiplication operators $M_\lambda$ and translation operators $D_\mu$, for $\lambda, \mu \geq 0$, to the closed subspace $K_{\lambda, s}$, for $\lambda \in \bR, s> 0$. Also, for $\lambda \in \bR$, let
$\rho_\lambda$ be given by restriction to $M_\lambda H^2(\bR)$, and for $\mu\in \bR$ let $\rho^\mu$ be given by restriction to $D_\mu L^2(\bR_+)$. Finally, let $\rho_{id}$ be the identity representation.

Recall that a semigroup of isometries is said to be \emph{pure} if the intersection of the ranges of the isometries is the zero subspace.  We say that the isometric representation  $\rho$ is of type $uu, up, pu,$ or $ pp$ if the semigroups of isometries $\{U_\lambda\}, \{V_\mu\}$ are, respectively, (i) unitary semigroups, (ii) a unitary semigroup and a pure semigroup,  (iii) a pure semigroup and unitary semigroup,  and (iv) pure semigroups. In particular $\rho$ is of  type $pp$ if the intersection of the spaces $U_\lambda \mathcal{K}$ for $\lambda \geq 0$ is the zero subspace, and
the intersection of the spaces $V_\mu \mathcal{K}$ for $\mu \geq 0$ is the zero subspace.

The Stone-von Neumann uniqueness theorem for unitary groups satisfying the Weyl commutation relations ensures that the strongly irreducible isometric representations $\rho$ of type $uu$ are unitarily equivalent to $\rho_{\rm id}$. More generally we obtain the following classification.

\begin{thm}\label{mainthrm}
Let $\rho$ be an isometric representation of the Weyl commutation relations
%SOT-continuous isometric representation of $\mathfrak{W}_+$ on a Hilbert space 
which is strongly irreducible and which is not of type $uu$. Then $\rho$ satisfies one of the following 3 equivalences.
\medskip

(i) $\rho$ is of type $pu$ and is unitarily equivalent to $\rho_\lambda$ for some $\lambda \in \bR$,

(ii) $\rho$ is of type $up$ and is unitarily equivalent to $\rho^\mu$ for some $\mu \in \bR$,

(iii) $\rho$ is of type $pp$ and is unitarily equivalent to $\rho_{\lambda,s}$ for some $\lambda \in \bR, s>0$.
\medskip

\noindent Moreover, any pair of representations of the same type are not unitarily equivalent.

\end{thm}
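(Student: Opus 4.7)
The plan is to reduce the classification to the known structure of $Lat\A_p$ by combining Theorem \ref{t:dilatingisometric} with the Stone--von Neumann theorem. Given a strongly irreducible isometric representation $\rho=(U_\lambda,V_\mu)$ on $\mathcal{K}$, I would first pass to its unique minimal unitary dilation $(\tilde U_\lambda,\tilde V_\mu)$ on $\tilde{\mathcal{K}}\supseteq\mathcal{K}$ provided by Theorem \ref{t:dilatingisometric}. By hypothesis this dilation is irreducible, so Stone--von Neumann supplies a unitary $W\colon\tilde{\mathcal{K}}\to L^2(\bR)$ intertwining $(\tilde U_\lambda,\tilde V_\mu)$ with the standard pair $(M_\lambda,D_\mu)$. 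Replacing $\rho$ by its unitary equivalent, I may assume $\mathcal{K}\subseteq L^2(\bR)$ with $U_\lambda=M_\lambda|_{\mathcal{K}}$ and $V_\mu=D_\mu|_{\mathcal{K}}$ for $\lambda,\mu\geq 0$. Invariance of $\mathcal{K}$ under both semigroups then places $\mathcal{K}$ in $Lat\A_p$.

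Next I would invoke the disc parametrisation $Lat\A_p=\{K_{\lambda,s}:\lambda\in\bR,\,s\geq 0\}\cup\mathcal{N}_v$ (together with the trivial subspaces) and match each family of invariant subspaces with the corresponding type. The choice $\mathcal{K}=L^2(\bR)$ yields $\rho_{\rm id}$, which is of type $uu$ and excluded by hypothesis. If $\mathcal{K}=L^2([\mu,\infty))$, then $\{M_\lambda\}$ acts unitarily while $\{D_{\mu'}\}$ is a pure shift semigroup, giving type $up$ and $\rho=\rho^\mu$. If $\mathcal{K}=K_{\lambda,0}=M_\lambda H^2(\bR)$, then Beurling's theorem gives $\bigcap_{\lambda'\geq 0}M_{\lambda'}H^2(\bR)=\{0\}$, so $\{M_{\lambda'}\}$ is pure, while $\{D_\mu\}$ preserves $H^2(\bR)$ and acts unitarily, giving type $pu$ and $\rho=\rho_\lambda$. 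For $\mathcal{K}=K_{\lambda,s}$ with $s>0$ both semigroups are pure, giving type $pp$ and $\rho=\rho_{\lambda,s}$.

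For the uniqueness clause, suppose $T\colon\mathcal{K}_1\to\mathcal{K}_2$ is a unitary equivalence between two of the model representations, realised after the above identification as subspaces of $L^2(\bR)$. By uniqueness of the minimal unitary dilation, $T$ extends to a unitary $\tilde T$ on $L^2(\bR)$ intertwining the dilations, and since $\tilde T$ commutes with $\{M_\lambda\}_{\lambda\geq 0}$ and $\{D_\mu\}_{\mu\geq 0}$ it automatically commutes with their adjoints, hence with the full Weyl unitary system. Schur's lemma applied to this irreducible system forces $\tilde T=cI$ for some unimodular scalar, and therefore $\mathcal{K}_1=\mathcal{K}_2$ as subspaces of $L^2(\bR)$. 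This rules out equivalences across the three families and, within each family, forces the parameters to coincide since the model subspaces $K_{\lambda,s}$, $M_\lambda H^2(\bR)$, and $L^2([\mu,\infty))$ are distinct for distinct parameter values.

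The principal obstacle is the extension step in the uniqueness argument: one needs to verify that an intertwiner of isometric semigroup pairs lifts canonically to an intertwiner of their minimal unitary dilations. This should follow from the essentially unique functorial construction of the minimal dilation in Theorem \ref{t:dilatingisometric}. The remaining content reduces either to direct inspection of the generators inside $L^2(\bR)$ or to routine appeals to Stone--von Neumann and Schur.
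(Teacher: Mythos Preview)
Your reduction to subspaces of $Lat\A_p$ via Theorem \ref{t:dilatingisometric} and Stone--von Neumann, and the subsequent type analysis, is exactly what the paper does. The difference lies in the uniqueness argument.

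The paper does not lift the intertwiner $Z\colon\H_1\to\H_2$ via uniqueness of the minimal dilation. Instead it applies Sarason's commutant lifting theorem to the single semigroup $\{M_\lambda:\lambda\geq 0\}$: the intertwining relation $ZM_\lambda|_{\H_1}=M_\lambda|_{\H_2}Z$ forces $Z$ to be the compression of an operator in the commutant of $\{M_\lambda\}$, hence $Z=M_h$ for some unimodular $h$. The second intertwining relation $M_hD_\mu=D_\mu M_h$ on $\H_1$ then yields $D_\mu h=h$, so $h$ is constant and $\H_1=\H_2$.

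Your route---extend $T$ to $\tilde T$ on $L^2(\bR)$ by uniqueness of the minimal WCR dilation, then invoke Schur's lemma for the irreducible Weyl system---is also valid, and arguably cleaner once that uniqueness is in hand. The point you flag is the right one: uniqueness of the minimal dilation for a WCR pair is not proved in Theorem \ref{t:dilatingisometric}, only existence. It does follow by the same inner-product computation used there (vectors $\tilde U_\lambda \tilde V_\mu h$ span the dilation space and their pairwise inner products reduce, via the WCR, to expressions in $S_\lambda,T_\mu$ on $\H$ alone), so two minimal dilations are canonically unitarily equivalent over $\H$. With that lemma supplied, your argument goes through. The trade-off: the paper's approach uses a sharper but more specialised tool (Sarason lifting) and yields the explicit form $Z=M_h$, while yours is softer and more structural, relying only on dilation uniqueness and irreducibility.
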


The proof has 3 ingredients, namely, the unitary dilation  of isometric representations of the Weyl commutation relations, the Stone-von Neumann theorem,  and the nature of the closed invariant subspaces for the model representation $\rho_{id}$.
The following dilation theorem, for the first step here, is perhaps well-known. The proof we give is a simple variation of the proof of the dilation of commuting isometries given in Paulsen \cite{pau}.

\begin{thm}\label{t:dilatingisometric}
Let $\{S_\lambda: \lambda\geq 0\}, \{T_\mu:\mu\geq 0\}$  be an
isometric representation of the Weyl commutation relations on the Hilbert space $\H$.
Then there is  a Hilbert space $\mathcal{K}$ and unitary representations  $\{U_\lambda : \lambda\in \bR\}, \{V_\mu:\mu\in \bR\}$, acting on $\mathcal{K}$, with $P_{\mathcal{H}} U_\lambda\big|_{\mathcal{H}}=S_\lambda$ and $  P_{\mathcal{H}} V_\mu\big|_{\mathcal{H}}=T_\mu$, for  $\lambda,\mu\geq 0$.
\end{thm}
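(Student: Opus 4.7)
My plan is to dilate the two semigroups sequentially via a Hilbert space direct-limit construction, choosing at each stage a phase factor forced by consistency so that the WCR is preserved throughout. This is a continuous-semigroup variant of the Paulsen-style argument for two commuting isometries \cite{pau}. First, I would dilate $\{S_\lambda\}$ alone: form the direct limit $\K_1$ of the constant system $(\H, S_\delta)$, i.e., take equivalence classes $[f,\sigma]$ of pairs in $\H \times \bR_+$ under $(f,\sigma) \sim (S_\delta f, \sigma+\delta)$, with inner product $\langle [f,\sigma],[g,\sigma]\rangle := \langle f,g\rangle_\H$ (well-defined because each $S_\delta$ is an isometry). The map $f \mapsto [f,0]$ embeds $\H$ isometrically in $\K_1$, and $U_\lambda[f,\sigma] := [f,\sigma-\lambda]$ (evaluated on representatives with $\sigma \geq \lambda$) defines a strongly continuous one-parameter unitary group on $\K_1$ extending $\{S_\lambda\}$ on $\H$.

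Second, I would extend $\{T_\mu\}$ to an isometric semigroup on $\K_1$ still satisfying the WCR with $\{U_\lambda\}$. Using the rearranged relation $T_\mu S_\delta = e^{-i\delta\mu} S_\delta T_\mu$, I would seek a phase $c(\sigma)$ for which $\tilde T_\mu[f,\sigma] := [c(\sigma) T_\mu f,\sigma]$ respects the equivalence relation defining $\K_1$; the consistency condition $c(\sigma+\delta) = c(\sigma) e^{i\delta\mu}$ forces $c(\sigma) = e^{i\sigma\mu}$ after normalising $c(0)=1$. Short computations on representatives then show that $\tilde T_\mu$ is a strongly continuous isometric semigroup agreeing with $T_\mu$ on $\H$, and that $U_\lambda \tilde T_\mu = e^{i\lambda\mu} \tilde T_\mu U_\lambda$ on $\K_1$.

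Third, I would repeat the direct-limit procedure with $\{\tilde T_\mu\}$ in place of $\{S_\lambda\}$, producing $\K \supseteq \K_1$ and a strongly continuous unitary group $\{V_\mu\}$ extending $\{\tilde T_\mu\}$. To keep $\{U_\lambda\}$ unitary on $\K$ and preserve the WCR, I would extend it by $\hat U_\lambda[g,\tau] := [e^{-i\lambda\tau} U_\lambda g,\tau]$; the phase is again the unique one forced by consistency on the new equivalence classes, and the identity $\hat U_\lambda V_\mu = e^{i\lambda\mu} V_\mu \hat U_\lambda$ then drops out of a direct calculation. Since the constructed unitaries actually extend the original isometries on $\H$, the required compression formulas $P_\H U_\lambda|_\H = S_\lambda$ and $P_\H V_\mu|_\H = T_\mu$ follow immediately.

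The only delicate point, and the main but minor obstacle, is pinpointing the correct phase factor at each extension step so as to simultaneously respect the direct-limit equivalence and the $e^{i\lambda\mu}$ twist of the WCR; once the ansatz is written down the consistency equation determines the phase uniquely up to a unimodular constant, and everything else reduces to routine calculation on representatives plus a standard strong-continuity check.
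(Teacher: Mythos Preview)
Your proposal is correct and follows essentially the same two-stage strategy as the paper: first dilate $\{S_\lambda\}$ to a unitary group on a larger space, extend $\{T_\mu\}$ to that space by the unique WCR-consistent phase twist, and then repeat with the roles reversed. The only difference is presentational---you use an explicit direct-limit model (so well-definedness reduces to a consistency check on representatives), whereas the paper invokes Naimark's theorem abstractly and verifies well-definedness of $\tilde T_\mu$ via a direct isometry computation on finite sums $\sum \tilde S_{\lambda_n}h_n$; the underlying construction and the forced phase factors are the same.
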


\begin{proof}%{\color{blue}(to review)}
By Naimark's theorem (\cite{pau} Theorems 4.8 and 5.4), there is a Hilbert space $\mathcal{K}_1$ containing $\mathcal{H}$ and a strong operator topology continuous (SOT-continuous) unitary group $\{\tilde{S}_\lambda :\lambda\in\bR\}$ such that $P_{\mathcal{H}}\tilde{S}_\lambda\big|_{\mathcal{H}}=S_\lambda$ for every $\lambda\geq 0$. We may assume that this is a minimal dilation of $\{S_\lambda :\lambda\geq 0\}$, that is,  the linear span of $\{\tilde{S}_\lambda h :\lambda\in \bR,\, h\in \mathcal{H}\}$ is dense in $\mathcal{K}_1$.

Fix some $\mu\geq0$ and define :
\begin{align*}
\tilde{T}_\mu\left(\sum_{n=1}^N \tilde{S}_{\lambda_n} h_n\right)=\sum_{n=1}^N e^{i\lambda_n \mu}\tilde{S}_{\lambda_n}T_\mu h_n.
\end{align*}
We claim that $\tilde{T}_\mu$ is well-defined and isometric. To see this note that given $h=\sum\limits_{n=1}^N \tilde{S}_{\lambda_n} h_n$
we have
\begin{align*}
\|\tilde{T}_{\mu} h\|^2&=
\left\langle \tilde{T}_\mu\left(\sum_{n=1}^N \tilde{S}_{\lambda_n} h_n\right),
\tilde{T}_\mu\left(\sum_{m=1}^N \tilde{S}_{\lambda_m} h_m\right)\right\rangle \\&=
\left\langle \sum_{n=1}^N e^{i\lambda_n \mu}\tilde{S}_{\lambda_n}T_\mu h_n, 
\sum_{m=1}^N e^{i\lambda_m \mu}\tilde{S}_{\lambda_m}T_\mu h_m \right\rangle  \\& =
\sum_{\lambda_n\geq \lambda_m}\left\langle e^{i\lambda_n \mu}\tilde{S}_{\lambda_n}T_\mu h_n,
e^{i\lambda_m \mu}\tilde{S}_{\lambda_m}T_\mu h_m \right\rangle +
\sum_{\lambda_m> \lambda_n}\left\langle e^{i\lambda_n \mu}\tilde{S}_{\lambda_n}T_\mu h_n,
e^{i\lambda_m \mu}\tilde{S}_{\lambda_m}T_\mu h_m \right\rangle\\&=
\sum_{\lambda_n\geq \lambda_m}\left\langle e^{i(\lambda_n - \lambda_m) \mu}\tilde{S}_{\lambda_n-\lambda_m}T_\mu h_n,
T_\mu h_m \right\rangle +
\sum_{\lambda_m> \lambda_n}\left\langle T_\mu h_n,
e^{i(\lambda_m-\lambda_n) \mu}\tilde{S}_{\lambda_m-\lambda_n)}T_\mu h_m \right\rangle \\&=
\sum_{\lambda_n\geq \lambda_m}\left\langle e^{i(\lambda_n - \lambda_m) \mu}S_{\lambda_n-\lambda m}T_\mu h_n,
T_\mu h_m \right\rangle +
\sum_{\lambda_m> \lambda_n}\left\langle T_\mu h_n,
e^{i(\lambda_m-\lambda_n) \mu}S_{\lambda_m-\lambda_n}T_\mu h_m \right\rangle \\&=
\sum_{\lambda_n\geq \lambda_m}\left\langle T_\mu S_{\lambda_n-\lambda_m} h_n,
T_\mu h_m \right\rangle +\sum_{\lambda_m> \lambda_n}\left\langle T_\mu h_n,T_\mu
S_{\lambda_m-\lambda_n} h_m \right\rangle \\&=
\sum_{\lambda_n\geq \lambda_m}\left\langle S_{\lambda_n-\lambda_m} h_n,
 h_m \right\rangle +\sum_{\lambda_m> \lambda_n}\left\langle h_n,
S_{\lambda_m-\lambda_n} h_m \right\rangle \\&=
\sum_{\lambda_n\geq \lambda_m}\left\langle S_{\lambda_n-\lambda_m} h_n,
 h_m \right\rangle +\sum_{\lambda_m> \lambda_n}\left\langle h_n,
S_{\lambda_m-\lambda_n} h_m \right\rangle \\&=
\left\langle \sum_{n=1}^N \tilde{S}_{\lambda_n} h_n,
\sum_{m=1}^N \tilde{S}_{\lambda_m} h_m\right\rangle=\|h\|^2.
\end{align*}
%{\color{blue}in the final version we can arrange to keep this nice formula block together on one page}

 Also, it follows that $\{\tilde{T}_\mu:\mu\geq 0\}$ is a SOT-continuous semigroup of isometries and  that the unitary  operators 
$\tilde{S}_\lambda$ and the isometries $\tilde{T}_\mu$ satisfy the WCR for all real  $\lambda$ and for $\mu\geq 0$. 

Now consider the minimal unitary dilation $V_\mu$ of the semigroup of isometries $\tilde{T}_\mu$ on a  Hilbert space $\K$ containing $\K_1$, so that $P_{\mathcal{K}_1}V_\mu\big|_{\mathcal{K}_1}=\tilde{T}_\mu$ for $\mu\geq 0$.
We have
$\mathcal{K}=\overline{span\{V_\mu k : \mu\in\bR, k\in\mathcal{K}_1\}}^{\|\cdot\|}$ and we may define the unitary group $\{U_\lambda:\lambda\in\bR\}$ by
\[
U_\lambda\left(\sum_{n=1}^N V_{\mu_n} h_n\right)=\sum_{n=1}^N e^{i\lambda \mu_n}T_{\mu_n} \tilde{S}_\lambda h_n.
\]
That the operators $U_\lambda$ are unitary follows from the argument above and it follows that
$\{U_\lambda\}$ and $\{V_\mu\}$ give the required unitary dilation 
\end{proof}

\begin{proof}[Proof of Theorem \ref{mainthrm}]
By the previous theorem and the Stone von Neumann theorem  every strongly irreducible isometric representation $\rho$ is unitarily equivalent to a representation $\rho_\H$ obtained from the restriction of $\{M_\lambda:\lambda \geq 0\}$ and $\{D_\mu : \mu \geq 0\}$ to an invariant subspace $\H$. In particular $\H$ is a nonzero subspace in $ Lat \A_p$ and by the description of this lattice  in Section \ref{s:parabolicalgebra} the subspace $\H$  takes one of the 4 types, 
(i) $L^2(\bR)$, (ii) $M_\lambda H^2(\bR)$, (iii) $D_\mu L^2(\bR_+)$, (iv)  $K_{\lambda,s},$ for some $\lambda \in \bR$ and  $s> 0$. 

%{\color{magenta}TO DO. Small argument of some kind to show that **all** pairs are unitarily inequivalent}
%PERHAPS along lines: 
 
Let $\rho_{\H_1}, \rho_{\H_2}$ be any two such representations which are unitarily equivalent. Then there is a unitary $Z:\H_1 \to \H_2$ such that
$Z\circ\rho_{\H_1}\circ Z^* = \rho_{\H_2}$. In particular
\[
ZM_\lambda|_{\H_1}= M_\lambda|_{\H_2}Z, \quad \mbox{ for } \lambda \geq 0.
\]
By the intertwining form of the lifting theorem for the commutant of a continuous semigroup of isometries it follows that   $Z= P_{\H_1}\tilde{Z}|_{\H_2}$ where
$\tilde{Z}$ is an operator  in the commutant of $\{M_\lambda :\lambda \geq 0\}$. (The single isometry variant of this, from which this semigroup lifting theorem may be deduced,  is due to Sarason \cite{sar-2}.) Thus $Z= M_h$ for some unimodular function $h(z)$. It now follows that
\[
M_hD_\mu|{\H_1}=D_\mu M_h|{\H_1}, \quad \mbox{ for } \mu \geq 0,
\]
from which it follows that $D_\mu h = h$ for all $\mu$. Thus $h$ is a constant function and $\H_1$ is equal to $\H_2$.
\end{proof}

\begin{rem} We note that the assumption of strong irreducibility is necessary in Theorem \ref{mainthrm}. Let $\H$ be the subspace of $L^2(\bR^2)$ given by 
\begin{align*}
\mathcal{H}=\{f\in L^2(\bR^2): f(x,y)=0, \textrm{ for a.e. } (x,y)\in \bR_-^2\}
\end{align*}
and consider the strongly continuous isometric semigroups  with
$
U_\lambda= M_\lambda\otimes D_\lambda$ and $V_\mu=D_\mu\otimes I.
$
This gives an isometric representation of the partial Weyl commutation relations which is irreducible. However, their joint  minimal unitary dilation is given by the pair of semigroups $M_\lambda\otimes D_\lambda$ and $D_\mu\otimes I$, acting on $L^2(\bR^2)$. This representation is not irreducible, since the space $L^2(\bR)\otimes H^2(\bR)$ reduces both unitary groups, and in fact is a representation with infinite multiplicity.
\end{rem}

\section{Restriction algebras}\label{s:fromIsometricReps}

Let us refer to an operator algebra of the form $\A|_K$, with $K$ in $Lat \A$, as a \emph{restriction algebra} for  $\A$, and refer to the weak$^*$-closure, $(\A|_K)^{-w^*}$, as a \emph{closed restriction algebra}.
In Theorem \ref{mainthrm} we showed that the strongly 
isometric representations of the partial Weyl commutation relations were in bijective correspondence with the restriction representations, $\rho_K$ say, for nonzero subspaces $K$ in $Lat \A_p$. We now show that nevertheless, the closed restriction algebras of $\A_p$, for $K$ a proper subspace, are all unitarily equivalent, and in fact are unitary equivalent to the Volterra nest algebra for the half-line.

%\subsection{Closed restriction algebras} 
We first note the following elementary unitary equivalences.

\begin{lemma}\label{l:intermediate} Let $K_1, K_2$ be proper closed subspaces which belong to one of the the following three subsets of $Lat \A_p$: (i)  $\N_v$, (ii)  $\N_a$, (iii) the subspaces $K_{\lambda, s}$, for $ \lambda\in \bR, s>0$.
Then the restriction algebras for $K_1$ and $K_2$ are unitarily equivalent.
\end{lemma}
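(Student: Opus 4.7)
The plan is to produce, in each of the three cases, a unitary $Z$ on $L^2(\bR)$ that both normalises the parabolic algebra, in the sense that $Z\A_p Z^\ast=\A_p$, and satisfies $Z K_1=K_2$. Whenever such a $Z$ exists, the unitary $V=Z|_{K_1}\colon K_1\to K_2$ implements the required unitary equivalence $\A_p|_{K_1}\cong\A_p|_{K_2}$ via $T\mapsto VTV^\ast$, and the equivalence automatically extends to the weak$^\ast$-closures.

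For case (i), write $K_j=L^2([\lambda_j,+\infty))$ and take $Z=D_{\lambda_2-\lambda_1}$. The identity $D_\mu M_\lambda D_{-\mu}=e^{-i\lambda\mu}M_\lambda$ shows that, for any real $\mu$, $Ad_{D_\mu}$ permutes the generators $\{M_\lambda:\lambda\geq 0\}\cup\{D_\nu:\nu\geq 0\}$ of $\A_p$ up to unimodular scalars and hence normalises $\A_p$; and $D_\mu$ obviously sends $L^2([\lambda_1,+\infty))$ onto $L^2([\lambda_1+\mu,+\infty))$. Case (ii) is dual: with $K_j=M_{\lambda_j}H^2(\bR)$, take $Z=M_{\lambda_2-\lambda_1}$, use the identity $M_\lambda D_\mu M_{-\lambda}=e^{i\lambda\mu}D_\mu$ to see that $Ad_{M_\lambda}$ normalises $\A_p$, and note that $M_\lambda$ translates along the analytic nest $\N_a$.

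Case (iii) needs an additional ingredient, because neither a pure translation nor a pure multiplication can alter the tilt parameter $s$ of $K_{\lambda,s}=M_\lambda M_{\phi_s}H^2(\bR)$. The key observation is that the dilation unitary $W_a f(x)=\sqrt{a}\,f(ax)$, $a>0$, also normalises $\A_p$: a direct calculation gives $W_a M_\lambda W_a^\ast=M_{a\lambda}$ and $W_a D_\mu W_a^\ast=D_{\mu/a}$, so each of the two generating semigroups of $\A_p$ is mapped into itself. Moreover $W_a$ preserves $H^2(\bR)$ (its Fourier support is simply rescaled by $a>0$) and satisfies $W_a M_{\phi_s}W_a^\ast=M_{\phi_{sa^2}}$; hence $W_a K_{\lambda,s}=K_{a\lambda,\,a^2 s}$. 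Given $(\lambda_j,s_j)$ with $s_j>0$, the choice $a=\sqrt{s_2/s_1}$ moves $K_{\lambda_1,s_1}$ to $K_{\lambda_1 a,\,s_2}$, after which a multiplication by $M_c$ with $c=\lambda_2-\lambda_1 a$ corrects the first parameter, so $Z=M_c W_a$ does the job. The only non-obvious step is spotting that the dilation is the right extra symmetry in case (iii); all the supporting computations are one-line verifications on the generators and on the quadratic phase $\phi_s$.
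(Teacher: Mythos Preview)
Your proof is correct and takes essentially the same approach as the paper: cases (i) and (ii) are handled by $D_\mu$ and $M_\lambda$ respectively, and for case (iii) the paper introduces the same dilation unitary (written there as $V_t f(x)=e^{t/2}f(e^t x)$, i.e.\ your $W_a$ with $a=e^t$), verifies the same commutation relations $V_t M_\lambda V_t^\ast=M_{\lambda e^t}$, $V_t D_\mu V_t^\ast=D_{\mu e^{-t}}$, and uses it together with an $M_\lambda$ to move between the subspaces $K_{\lambda,s}$. Your presentation is slightly tidier in isolating the general principle ($Z$ normalises $\A_p$ and $ZK_1=K_2$) up front, but the content is the same.
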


\begin{proof}It is straightforward to check that
the restriction algebras of (i) (resp. (ii)) are unitarily equivalent to the restriction algebra $\A_p|_{L^2(\bR_+)}$ (resp. $\A_p|_{H^2(\bR)}$) by means of a unitary of the form $D_\mu$ (resp. $M_\lambda$). For the algebras in (iii) we first introduce the dilation unitaries $V_t, t\geq 0,$ defined by  $(V_tf)(x) =e^{t/2}f(e^t x)$. (This abuses earlier notation for WCR isometries but is consistent with the $M_\lambda, D_\mu, V_t$ notation of  \cite{kas-pow}, \cite{kat-pow-1}, \cite{kat-pow-2}.) We have the commutation relations
\[
V_tM_\lambda = M_{\lambda e^t}V_t, \quad V_tD_\mu = D_{\mu e^-t}V_t.
\]
In particular the unitary automorphism $Ad_{V_t}$ of $B(L^2(\bR))$ restricts to a unitary automorphism of $\A_p$.
Also, for $s_1, s_2>0$ and $t =\frac{1}{2}\log{{\frac{s_1}{s_2}}},$ we have $V_tM_{\phi_{s_2}}H^2(\bR) = M_{\phi_{s_1}}H^2(\bR)$.
It follows routinely from this that the algebras of (iii) are unitarily equivalent to the algebra 
$\A_p|_{\phi_1H^2(\bR)}$ by means of  unitaries of the form $M_\lambda D_\mu V_t$.
\end{proof}

%=====
%We next identify the weak*-closed operator algebras $\A= \overline{(\A_p|_K)}^{w^*}$ where $K$ is a nontrivial subspace  in the lattice $\operatorname{Lat}\A_p$.
%{\color{blue}(at some point comment comment on WOT- and w*-closures (or make consistent) ?)} For the parameter values $s=0$ (resp. $"\infty"$) we may assume that $K=H^2$ (resp $L^2[0,\infty)$) while for the other parameter values we may assume that $K=K_{0,1}= \phi_1H^2(\bR)$. We remark that these 3 cases correspond to the strongly irreducible isometric representations $\pi_{pu}, \pi_{up}, \pi_{uu}$. In all cases we see that $\A$  is equal to the Volterra nest algebra $\A_{v+}$ on $L^2(\bR_+)$.We first show that the restriction of the parabolic algebra  $L^2(\bR_+)$ is weak*-dense in  $\A_{v+}$.

The next lemma is well-known  and is a consequence of
the weak$^*$-density of the algebra of analytic trigonometric polynomials
in the $L^\infty(\bR_+)$. For completeness we give a proof which is also a prelude to the separation argument for the proof of Lemma \ref{l:weakstardense2}. 

\begin{lemma}\label{l:Mlambdadense}
Let $P_+$ be the orthogonal projection on $L^2(\bR_+)$.  Then the operator algebra $P_+{M}_{H^\infty}|_{L^2(\bR_+)}$ %in $B(L^2(\bR_+))$ 
is $w^*$-dense in the maximal abelian  von Neumann algebra ${M}_{L^\infty(\bR_+)}$.
\end{lemma}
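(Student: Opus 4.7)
The plan is to prove this via a Hahn-Banach separation argument. The key first observation is to simplify the algebra: for $h \in H^\infty(\bR)$ and $f \in L^2(\bR_+)$ extended by zero to $\bR$, the product $hf$ is still supported on $\bR_+$, so $P_+ M_h|_{L^2(\bR_+)}$ coincides with the multiplication operator $M_{h|_{\bR_+}}$ on $L^2(\bR_+)$. Thus the question becomes whether $\{h|_{\bR_+} : h \in H^\infty(\bR)\}$ is weak$^*$-dense in $L^\infty(\bR_+)$.

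By the topological Hahn-Banach theorem, it suffices to show that if $\omega$ is a weak$^*$-continuous functional on $B(L^2(\bR_+))$ that annihilates $P_+ M_{H^\infty}|_{L^2(\bR_+)}$, then $\omega$ annihilates all of $M_{L^\infty(\bR_+)}$. Writing $\omega$ in the standard form $\omega(T) = \sum_n \langle T f_n, g_n\rangle$ with $f_n, g_n \in L^2(\bR_+)$ and $\sum \|f_n\|_2 \|g_n\|_2 < \infty$, the restriction to multiplication operators takes the form
\[
\omega(M_\psi) = \int_0^\infty \psi(x)\, k(x)\, dx, \qquad k(x) := \sum_n f_n(x)\overline{g_n(x)} \in L^1(\bR_+).
\]
The hypothesis then reads $\int_0^\infty h(x) k(x)\, dx = 0$ for every $h \in H^\infty(\bR)$, and it remains to deduce that $k = 0$ almost everywhere.

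The crucial step, and what I expect to be the main technical point, is a boundary-uniqueness argument for $H^\infty$ of the upper half-plane. Specialising to the characters $h_\lambda(x) = e^{i\lambda x}$, $\lambda \geq 0$, which all lie in $H^\infty(\bR)$, one defines
\[
K(z) = \int_0^\infty k(x) e^{izx}\, dx, \qquad \operatorname{Im}z \geq 0.
\]
Because $k \in L^1(\bR_+)$, the function $K$ is analytic in the open upper half-plane, continuous up to the real axis, and bounded by $\|k\|_1$ (so it lies in $H^\infty$ of the UHP). The hypothesis forces $K(\lambda) = 0$ for every $\lambda \geq 0$, i.e.\ $K$ vanishes on a set of positive Lebesgue measure on the boundary. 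By the F.\ and M.\ Riesz uniqueness theorem, $K \equiv 0$ on the upper half-plane, hence its boundary Fourier transform vanishes identically on $\bR$, and so $k = 0$ almost everywhere on $\bR_+$.

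With $k = 0$ in hand, $\omega$ annihilates every $M_\psi$ with $\psi \in L^\infty(\bR_+)$, and Hahn-Banach yields the required weak$^*$-density. The argument parallels the separation method invoked in the proof of Theorem \ref{t:notsynthetic}, and is meant to serve as a warm-up for the more delicate separation argument to be used in Lemma \ref{l:weakstardense2}.
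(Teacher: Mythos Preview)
Your proof is correct and follows essentially the same route as the paper: a Hahn--Banach separation argument, reduction to an $L^1$ kernel via the restriction to the multiplication algebra, testing against the exponentials $e^{i\lambda x}$ for $\lambda\geq 0$, and then an appeal to Hardy-space boundary uniqueness. The only cosmetic difference is that the paper phrases the last step as ``$\chi_{\bR_+}h$ lies in $H^1(\bR)$ and vanishes on $\bR_-$, hence is zero'', whereas you package it as ``the Fourier--Laplace transform $K$ lies in $H^\infty$ of the upper half-plane and vanishes on $[0,\infty)$, hence is zero''; these are the same uniqueness principle.
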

\begin{proof} 
 If $P_+{M}_{H^\infty} P_+$ is not dense in ${M}_{L^\infty(\bR_+)}$, then there exists an essentially bounded function $\phi$ supported on $\bR_+$ and  a $w^*$-continuous functional $\omega : B(L^2(\bR))\rightarrow\bC$, such that $\omega(P_+M_fP_+)=0$, for every bounded analytic function $f$, and $\omega(M_\phi)=1$. 
On the other hand, the restriction of $\omega$ to the multiplication algebra $M_{L^\infty(\bR)}$ induces a $w^*$-continuous functional on $L^\infty(\bR)$, which we also denote by $\omega$. Hence there exist $h\in L^1(\bR)$, such that
 \[\omega(f)=\int_\bR f(x) h(x)dx.\]
 Take $f(x)=e^{i\lambda x}$. Then 
\begin{align*}
\omega(P_+M_fP_+)=0\quad  &  \Leftrightarrow  
% \omega(\chi_{\bR_+}e^{\i\lambda x})= 0 
%&\Leftrightarrow 
\quad \int_\bR e^{i\lambda x} \chi_{\mathbb{R_+}}(x) h(x)dx=0.
%, \, \forall \lambda\geq 0 
\end{align*}
Since this is true for all $\lambda\geq 0$ it follows that  $\chi_{R_+}  h$ lies in $H^1(\bR)$ and so is equal to
% Since this function does not have full support, it is 
the zero function. Therefore the essential support of the function $h$ is contained in $\mathbb{R_-}$. Hence, given any function $\phi\in L^\infty(\bR_+)$
\begin{align*}
 \omega(M_\phi)=\int_\bR\phi(x)h(x)dx=0,
 \end{align*}
which is the desired contradiction.
\end{proof}

The Volterra nest algebra
$\A_{v+}$ on $L^2(\bR_+)$ is defined as the algebra of operators on this space which leaves invariant each of the subspaces $L^2[t, \infty)$, for $t\geq 0$.
%In view of the previous lemma we need only show this for the two subspaces $H^2(\bR)$ and $\phi_1H^2(\bR)$.

\begin{lemma}\label{l:weakstardense1}
The restriction algebra  $\A_p|_{L^2(\bR_+)}$ is $w^*$-dense in  $\A_{v+}$.
%Hence, it is reflexive.
\end{lemma}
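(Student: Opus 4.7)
The plan is to establish the reverse of the obvious inclusion $\A_p|_{L^2(\bR_+)}\subseteq \A_{v+}$, which itself follows from $L^2[t,\infty)\in \N_v\subseteq Lat\A_p$ for every $t\geq 0$, so restrictions to $L^2(\bR_+)$ preserve each $L^2[t,\infty)$. For density in $\A_{v+}$, I will exhibit every Hilbert-Schmidt operator of $\A_{v+}$ inside the $w^*$-closure of $\A_p|_{L^2(\bR_+)}$, and then invoke the density of such operators in a nest algebra.

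Since $L^2(\bR_+)$ is $M_h$-invariant for every $h\in H^\infty(\bR)$, the restriction $M_h|_{L^2(\bR_+)}=P_+M_hP_+$ lies in $\A_p|_{L^2(\bR_+)}$, and by Lemma \ref{l:Mlambdadense} these restrictions are $w^*$-dense in $M_{L^\infty(\bR_+)}$. Together with the semigroup $\{D_\mu|_{L^2(\bR_+)}:\mu\geq 0\}\subseteq \A_p|_{L^2(\bR_+)}$, this places every product $M_gD_\mu|_{L^2(\bR_+)}$ with $g\in L^\infty(\bR_+)$ and $\mu\geq 0$ into the $w^*$-closure.

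For $g\in L^\infty(\bR_+)\cap L^2(\bR_+)$ and $\phi\in L^1(\bR_+)\cap L^2(\bR_+)$, define
\[
T_{g,\phi}=\int_0^\infty \phi(\mu)\,M_gD_\mu|_{L^2(\bR_+)}\,d\mu.
\]
This operator arises as the $w^*$-limit of its uniformly norm-bounded Riemann sums, using the SOT-continuity of $\mu\mapsto D_\mu$ and the integrability of $\phi$, and hence lies in the $w^*$-closure of $\A_p|_{L^2(\bR_+)}$. A change of variables $t=x-y$ yields $(T_{g,\phi}f)(x)=g(x)\int_0^x \phi(x-y)f(y)\,dy$, exhibiting $T_{g,\phi}$ as the Hilbert-Schmidt integral operator with lower-triangular kernel $K(x,y)=g(x)\phi(x-y)\chi_{\{0\leq y\leq x\}}$. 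In the coordinates $(x,t)=(x,x-y)$ on $\Delta=\{(x,t):x\geq t\geq 0\}$ the kernel takes the separated form $g(x)\phi(t)$, and finite sums of such elementary tensors are dense in $L^2(\Delta)$. Since the Hilbert-Schmidt operators of $\A_{v+}$ are precisely the integral operators whose $\Theta_p$-transformed kernels sit in $L^2(\Delta)$, the $w^*$-closure contains all of them.

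The proof is then completed by invoking Erdos's density theorem, which asserts that the finite-rank (hence Hilbert-Schmidt) operators in any nest algebra are $w^*$-dense in the algebra; applied to $\A_{v+}$ this gives $(\A_p|_{L^2(\bR_+)})^{-w^*}=\A_{v+}$. The main step requiring care is the $w^*$-convergence of the Riemann sums defining $T_{g,\phi}$: uniform boundedness reduces this to weak operator convergence, which in turn follows from strong operator continuity of the integrand and dominated convergence against arbitrary trace-class duals, but it must be written out explicitly.
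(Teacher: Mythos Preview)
Your proof is correct and shares the decisive first step with the paper: both use Lemma~\ref{l:Mlambdadense} to place all of $M_{L^\infty(\bR_+)}$ in the $w^*$-closure of the restriction algebra, so that together with the shifts $D_\mu|_{L^2(\bR_+)}$ one has the generators $\{M_\lambda, D_\mu:\lambda\in\bR,\ \mu\geq 0\}$ available.

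The difference lies in how the argument is completed from there. The paper simply invokes the well-known fact that these operators generate $\A_{v+}$ as a $w^*$-closed algebra (observing that this can be read off from the $w^*$-density of $\operatorname{span}\{M_\lambda D_\mu:\lambda,\mu\in\bR\}$ in $B(L^2(\bR))$). You instead give an explicit construction: form the operators $T_{g,\phi}=\int_0^\infty\phi(\mu)M_gD_\mu\,d\mu$, identify them as Hilbert--Schmidt operators in $\A_{v+}$ whose kernels, after the change of variables $(x,y)\mapsto(x,x-y)$, exhaust a dense tensor-product span of $L^2(\Delta)$, and then appeal to Erdos's density theorem. Your route is more self-contained and makes no black-box appeal to a generation statement, at the cost of extra work (in particular the Riemann-sum/$w^*$-convergence step you flag does need the care you indicate). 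The paper's route is shorter but relies on the reader accepting the generation fact as standard. Both arrive at the same conclusion by the same opening move.
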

\begin{proof}
%Since $\overline{(\A_p\big|_{L^2(\bR_+)})}^{w^*}$ is evidently a subalgebra of its reflexive hull, which is equal to $Lat\left(\A_p\big|_{L^2(\mathbb{R_+})}\right)=\mathcal{N}_v\big|_{L^2(\mathbb{R_+})}$, it suffices to prove the inverse inclusion. 
By the previous lemma
%, the algebra $alg\left(\{M_\lambda : \lambda\geq 0\}\big|_{L^2(\mathbb{R_+})}\right)$ is dense in the $w^*$-topology in $M_{L^\infty(\bR_+)}$. Therefore 
the weak$^*$-closure of the restriction algebra 
%of the parabolic algebra on the space $L^2(\mathbb{R_+})$ 
contains the operators $\{M_\lambda, D_\mu :\lambda\in\bR, \mu\geq 0\}$. It is well-known that these unitaries generate $\A_{v+}$ as a the weak$^*$-closed operator algebra. Also, this can be deduced from the w*-density in $B(L^2(\bR))$
of the linear span of the products $M_\lambda D_\mu$, for $ \lambda, \mu \in \bR$.
\end{proof}

\begin{prop}\label{p:weakstardense2}
The restriction algebra  $\A_p|_{H^2(\bR_+)}$ is $w^*$-dense in
the nest algebra $Alg \N_{a+}$ on $H^2(\bR)$ for the nest 
\[
\N_{a+} = \{e^{i\lambda x}H^2(\bR): \lambda \geq 0\}\cup\{0\},
\]
and is unitarily equivalent to $\A_{v+}$.
\end{prop}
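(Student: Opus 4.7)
The plan is to transfer both claims to the Volterra picture via the Fourier-Plancherel transform $F$, reducing them to Lemma~\ref{l:weakstardense1} together with a conjugate version of Lemma~\ref{l:Mlambdadense}.

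First I would verify, via the Paley-Wiener theorem, that $U := F|_{H^2(\bR)}$ is a unitary onto $L^2(\bR_+)$, and then use the intertwining relations $FM_\lambda = D_\lambda F$ and $FD_\mu = M_{-\mu}F$ to show that $U$ carries $e^{i\lambda x}H^2(\bR)$ to $L^2[\lambda,\infty)$ for every $\lambda \geq 0$. This identifies $\N_{a+}$ order-isomorphically with the nest $\N_{v+} := \{L^2[t,\infty) : t \geq 0\}\cup\{0\}$ on $L^2(\bR_+)$, so conjugation by $U$ is a spatial $*$-isomorphism of $Alg\,\N_{a+}$ onto $\A_{v+}$; this yields the unitary equivalence claim once the density claim is proved. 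Under the same conjugation the generators of $\A_p|_{H^2(\bR)}$ transform predictably: the operators $M_\lambda|_{H^2(\bR)}$ ($\lambda \geq 0$) become the isometric shifts $D_\lambda|_{L^2(\bR_+)}$, and the $D_\mu|_{H^2(\bR)}$ ($\mu \geq 0$) become the unitary multiplications $M_{-\mu}|_{L^2(\bR_+)}$. Each of these preserves $\N_{v+}$, so the image algebra $\B$ is contained in $\A_{v+}$, and the remaining task is to establish $w^*$-density of $\B$ in $\A_{v+}$.

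The hard part (though only mildly so) will be the conjugate analogue of Lemma~\ref{l:Mlambdadense}: that $\{M_{-\mu}|_{L^2(\bR_+)} : \mu \geq 0\}$ is $w^*$-dense in $M_{L^\infty(\bR_+)}$. I would reuse the Fourier-$L^1$ separation argument verbatim; the only change is that the resulting function $\chi_{\bR_+}h$ now lies in $\overline{H^1(\bR)}$ rather than $H^1(\bR)$, and one invokes the F.\,and\,M.\,Riesz theorem applied to the complex conjugate of $\chi_{\bR_+}h$ to force it to vanish almost everywhere. Granted this, the proof of Lemma~\ref{l:weakstardense1} applies without further modification, with the roles of the translation and multiplication semigroups interchanged, to conclude $\B = \A_{v+}$, and thereby finishes both claims of the proposition simultaneously.
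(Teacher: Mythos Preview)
Your proposal is correct and follows essentially the same route as the paper: transfer to the Volterra picture via the Fourier--Plancherel unitary $F|_{H^2(\bR)}\to L^2(\bR_+)$, use the intertwining relations to send the generators $M_\lambda, D_\mu$ to $D_\lambda, M_{-\mu}$, invoke the conjugate analogue of Lemma~\ref{l:Mlambdadense} (the paper simply writes ``as in the proof of Lemma~\ref{l:Mlambdadense}'' where you spell out the $\overline{H^1(\bR)}$ step), and then finish via the argument of Lemma~\ref{l:weakstardense1}. Your write-up is slightly more explicit about the nest correspondence $\N_{a+}\leftrightarrow\N_{v+}$ and about why the vanishing argument still works in the conjugate case, but the strategy is identical.
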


\begin{proof}
%We want now to check the reflexivity of the algebra $\overline{\A_p\big|_{H^2(\bR)}}^{w^*}$.
Let $F:H^2(\bR)\rightarrow L^2(\bR_+)$ be the restriction of the Fourier-Plancherel transform. Then,
\begin{align*}
F\A_p\big|_{H^2(\bR)} F^\ast P_+=\{ FAP_{H^2(\bR)}F^\ast P_+ : A\in\A_p\}= \{ FAF^\ast P_+ : A\in\A_p\}.
\end{align*}
Also, $FM_\lambda F^\ast= D_\lambda$ and $FD_\mu F^\ast= M_{-\mu}$. 
%it follows that 
%\begin{align*}
%F\A_p\big|_{H^2(\bR)} F^\ast\subseteq \overline{\operatorname{alg}\%{M_{-\lambda}P_+, D_{\mu}P_+ : \lambda,\mu\geq 0\}}^{w^*}.
%\end{align*}
As in the proof of Lemma \ref{l:Mlambdadense} the algebra generated by the semigroup $\{M_{-\lambda}P_+:\lambda\geq 0\}$ is dense in  $M_{L^\infty(\bR_+)}$. Therefore it follows that
$F\A_p\big|_{H^2(\bR)} F^\ast|_{L^2(\bR_+)}$ is $w^*$-dense in $\A_{v+}$. %Since multiplication is separately $w^*$-continuous, we have that
Thus 
\begin{align*}
%\overline
{(\A_p\big|_{H^2(\bR)})}^{-w^*}= 
%F\, Alg\mathcal{N}_v\big|_{L^2(\bR_+)}F^\ast
F^\ast(\A_{v+})F\big|_{H^2(\bR)}
= Alg \mathcal{N}_a\big|_{H^2(\bR)},
\end{align*}
as required.
%where $\mathcal{N}_a$ is the analytic nest, and so the proof is complete. 
\end{proof}

We now fix $s>0$ and consider the case $K= K_s= \phi_s H^2(\bR)$.
%, where $(M_{\phi_s} f)(x)=e^{-isx^2/2}f(x)$. 
Recall from the proof of Theorem \ref{t:conealgebras} that 
$M_{\phi_s}M_{s\mu}D_{\mu}M_{\phi_s}^\ast=e^{-is\mu^2/2}D_\mu$.
% and {\color{blue} ?? $D_{\phi_s}D_{-s\mu}M_{\mu}D_{\phi_s}^\ast=e^{-is\mu^2/2}M_\mu$.}
%SORT FORMULAE
%$M_{\phi_s}D_{\mu}M_{\phi_s}^\ast =  e^{-is\mu^2/2}M_{s\mu}D_\mu $
Applying $Ad(M_{\phi_s}^\ast)$ to $\A_p\big|_{K_s}$ we have %{\color{blue}minor edits for equality of operator algebras on .. :}
the following identification of operator algebras on $H^2(\bR)$:
\begin{align*}
 M_{\phi_s}^\ast \A_p\big|_{K_s} M_{\phi_s}
 &= \{M_{\phi_s}^\ast A P_{K_s} M_{\phi_s}:A\in\A_p\}\\
 &= \{M_{\phi_s}^\ast AM_{\phi_s}P_{H^2(\bR)}:A\in\A_p\}\\&=
 \{AP_{H^2(\bR)}:A\in\A_{sp}\}
 \end{align*}
where $\A_{sp}$ is the weak$^*$-closed algebra generated by
$\{M_\lambda,M_{s\mu}D_\mu : \lambda,\mu\geq 0\}$.

Applying the Fourier transform we obtain that the algebra
\[
 \A_{sp}^\mathcal{F}\big|_{L^2(\bR_+)}:= F^*\A_{\rm sp}F\big|_{H^2(\bR)} 
\]
is generated as a weak$^*$-closed algebra by the set of isometries
\[
\{D_{\lambda}\big|_{L^2(\bR_+)}, D_{s\mu}M_{-\mu}\big|_{L^2(\bR_+)} : \lambda, \mu \geq 0\}.
\]

\begin{lemma}\label{l:weakstardense2}
 The algebra
$\A_{sp}^\mathcal{F}\big|_{L^2(\bR_+)}$ 
%D_{\lambda}\big|_{L^2(\bR_+)} : \lambda, \mu \geq 0\}$ 
is dense  in
the Volterra nest algebra $\A_{v+}$.
% $Alg\left(\mathcal{N}_v\big|_{L^2(\bR_+)}\right)$ for the  $w^*$-topology. {\color{blue}edit, also use notation $\A_{v+}$?}
\end{lemma}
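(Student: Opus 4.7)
The plan is to mirror the Hahn--Banach separation argument used in Lemmas \ref{l:Mlambdadense} and \ref{l:weakstardense1}. First I would verify that $\A_{sp}^\mathcal{F}\big|_{L^2(\bR_+)} \subseteq \A_{v+}$, which is immediate since both $D_\lambda\big|_{L^2(\bR_+)}$, for $\lambda \geq 0$, and $D_{s\mu}M_{-\mu}\big|_{L^2(\bR_+)}$, for $\mu \geq 0$, preserve every subspace $L^2([t,\infty))$, $t\geq 0$. A direct product computation then yields the identity
\[
D_\lambda\cdot D_{s\mu}M_{-\mu} = D_{\lambda+s\mu}M_{-\mu},
\]
so $D_t M_{-\mu}\big|_{L^2(\bR_+)}$ lies in the algebra for every pair $(\mu,t)$ with $\mu\geq 0$ and $t\geq s\mu$.

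Next I would take a $w^*$-continuous linear functional $\omega$ on $B(L^2(\bR_+))$ that annihilates the algebra, expressed in the standard form $\omega(T) = \sum_n \alpha_n\langle Tf_n,g_n\rangle$ with $\|f_n\|_2 = \|g_n\|_2 = 1$ and $\sum_n|\alpha_n|<\infty$. For each $t \geq 0$ I would introduce the auxiliary function
\[
h_t(y) = \sum_n \alpha_n f_n(y)\overline{g_n(y+t)}, \qquad y\geq 0,
\]
which lies in $L^1(\bR_+)$ with norm at most $\sum_n|\alpha_n|$. A change-of-variable computation gives
\[
\omega(D_t M_{-\mu}\big|_{L^2(\bR_+)}) = \int_0^\infty e^{-i\mu y}h_t(y)\,dy,
\]
while the parallel calculation produces $\omega(M_\phi D_\tau\big|_{L^2(\bR_+)}) = \int_0^\infty \phi(y+\tau)h_\tau(y)\,dy$ for any $\phi\in L^\infty(\bR_+)$ and $\tau\geq 0$.

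The key step, and the main technical obstacle, is to deduce from the vanishing of the first integral for every $\mu\in[0,t/s]$ that $h_t\equiv 0$. This is a Paley--Wiener and Schwarz reflection argument: since $h_t$ is $L^1$ and supported on $\bR_+$, its Fourier transform extends to a function holomorphic in the open lower half plane with continuous boundary values, and vanishing on a real boundary interval of positive length forces the extension, and hence $h_t$, to vanish identically. Granting this, the second identity gives $\omega(M_\phi D_\tau)=0$ for every $\phi \in L^\infty(\bR_+)$ and $\tau\geq 0$. Since the linear span of such operators is $w^*$-dense in $\A_{v+}$ (by the generating argument used at the end of the proof of Lemma \ref{l:weakstardense1}), $\omega$ must vanish on the whole of $\A_{v+}$, and a Hahn--Banach separation completes the proof.
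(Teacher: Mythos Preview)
Your argument is essentially the same as the paper's: both use Hahn--Banach separation, both identify for each shift parameter an $L^1(\bR_+)$ function whose Fourier transform must vanish on a nontrivial real interval, and both invoke the Paley--Wiener/identity-theorem step to conclude that this function is identically zero. The only organizational difference is that the paper packages the kernel as an auxiliary functional $\omega_\mu(T)=\omega(P_+D_{s\mu}TP_+)$ and then shows only that $M_{L^\infty(\bR_+)}$ lies in the closure (appealing afterwards to the standard generation of $\A_{v+}$), whereas you write down $h_t$ explicitly and aim directly at all operators $M_\phi D_\tau$.

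One small point to tighten: your Fourier-vanishing argument gives $h_t\equiv 0$ only for $t>0$, since at $t=0$ the interval $[0,t/s]$ degenerates to a point. So the identity $\omega(M_\phi D_\tau)=0$ is established only for $\tau>0$. This is harmless---either pass to the limit $\tau\to 0^+$ using strong continuity of $D_\tau$ (exactly the step the paper makes explicit), or simply observe that the operators $M_\phi D_\tau$ with $\tau>0$ already have $w^*$-dense span in $\A_{v+}$.
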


\begin{proof}
Fix some $s,\mu>0$.
Let $\omega$ be a $w^*$-continuous functional 
\[\omega: B(L^2(\bR))\rightarrow \bR: T \mapsto \sum_k\langle T h_k,g_k\rangle\] 
for some $h_k,g_k\in L^2(\bR)$. Suppose that it annihilates 
$P_+\A_{sp}^\mathcal{F} P_+$, and therefore the operators 
$P_+D_{s\mu}M_{-\lambda}P_+$ for all $\lambda \in [0,\mu]$.
Then for these parameters we have
\begin{align*}
\omega(P_+ D_{s\mu}M_{-\lambda}P_+) = 0.
%\sum_k \int_{\bR_+} e^{-i\lambda x}} h_k(x)\overline{(P_+g_k)(x+s\mu)} dx 
\end{align*}
Define the bounded linear functional
\[\omega_\mu: B(L^2(\bR))\rightarrow \bR: T \mapsto \omega(P_+D_{s\mu} T P_+).\]
%\sum _k\langle T h_k,D_{-s\mu}P_+ g_k\rangle.\] 
%Check that for every $T\in B(L^2(\bR))$ we have 
%\[|\omega_\mu(T)|=|\sum _k\langle P_+ D_{-s\mu}T h_k,g_k\rangle |\leq \|\omega\|\,\|T\|.\]
Identify the restriction of $\omega_\mu$ on the multiplication algebra $M_{L^\infty(\bR)}$ with the $w^*$-continuous functional $\omega_\mu (f)=\int_{\bR} f(x) h_\mu(x) dx$, where $h_\mu$ is an $L^1(\bR)$ function. Then it follows from the definition of $\omega_\mu$ that $h_\mu$ is zero on $\bR_-$. 

Thus 
\[0=\omega(D_{s\mu}M_{-\lambda}P_+)=\omega_\mu (M_{-\lambda})= \int_{\bR} e^{-i\lambda x}h_\mu(x)dx. \]

Since $\int_{\bR} e^{-i\lambda x}h_\mu(x)dx=0$, it follows that $\hat{h}_\mu$ vanishes on the interval $(0,\mu)$. On the other hand,  $\hat{h}_\mu\in H^\infty(\bR)\cup {C}_0(\bR)$ and so $h_\mu = 0$. Thus for  $f \in L^\infty(\bR_+)$ we have  
\[\omega (M_f)=\lim\limits_{\mu\rightarrow 0} \omega (P_+D_{s\mu}M_fP_+)=\lim\limits_{\mu\rightarrow 0} \omega_\mu(M_f)=0.\]
It follows, by the usual separation principle, that the multiplication algebra for $L^\infty(\bR_+)$ must lie in the $w^*$-closure. Since the right shift operators also lie in the algebra it follows by standard arguments  that $\A_{v+}$ is contained in the closure, completing the proof.
\end{proof}

Combining the results of this section, we have the following,
\begin{thm}
Let $K$ be a proper invariant subspace of the parabolic algebra. Then 
%the closed restriction algebra 
${(\A_p|_K)}^{-w^*}$  is unitarily equivalent to the Volterra nest algebra $\A_{v+}$.
%Given a subspace $K$ in the lattice of the parabolic algebra, the operator algebra $\A= \overline{(\A_p|_K)}^{w^*}$ is reflexive and unitarily equivalent to the Volterra nest algebra.
\end{thm}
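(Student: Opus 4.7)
The plan is to use Lemma~\ref{l:intermediate} to reduce the theorem to three canonical representatives, one from each of the three families that partition the proper nonzero subspaces of $Lat\A_p$: (a) take $K=L^2(\bR_+)\in\N_v$, (b) take $K=H^2(\bR)\in\N_a$, and (c) take $K=M_{\phi_s}H^2(\bR)$ for some fixed $s>0$. Since Lemma~\ref{l:intermediate} supplies explicit unitaries (translations, modulations, and the dilation unitaries $V_t$) that intertwine the restriction algebras within each family, proving the theorem for these three subspaces suffices.

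For case (a), Lemma~\ref{l:weakstardense1} already says ${(\A_p|_{L^2(\bR_+)})}^{-w^*}=\A_{v+}$, so the identity on $L^2(\bR_+)$ gives the required unitary equivalence. For case (b), Proposition~\ref{p:weakstardense2} directly provides the conclusion: the restriction of the Fourier--Plancherel transform $F\colon H^2(\bR)\to L^2(\bR_+)$ is a unitary that carries ${(\A_p|_{H^2(\bR)})}^{-w^*}$ onto $\A_{v+}$.

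The remaining case (c) is a two-step conjugation. First, conjugation by $M_{\phi_s}^{\ast}$ transports $\A_p|_{K_s}$ into operators on $H^2(\bR)$; using the identity $M_{\phi_s}^{\ast}M_{s\mu}D_\mu M_{\phi_s}=e^{-is\mu^2/2}D_\mu$ from the proof of Theorem~\ref{t:conealgebras}, one identifies the image with $\A_{sp}P_{H^2(\bR)}$, where $\A_{sp}$ is the weak$^*$-closed algebra generated by $\{M_\lambda,M_{s\mu}D_\mu:\lambda,\mu\geq 0\}$. Then applying $F$ converts this to the algebra $\A_{sp}^{\F}|_{L^2(\bR_+)}$ acting on $L^2(\bR_+)$. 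By Lemma~\ref{l:weakstardense2} this algebra is $w^*$-dense in $\A_{v+}$, which completes the unitary equivalence in case (c) via the composite unitary $F\circ M_{\phi_s}^{\ast}|_{K_s}\colon K_s\to L^2(\bR_+)$.

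The hard work has all been done in the preceding lemmas, so the remaining task is essentially bookkeeping: verifying that the conjugating unitaries are indeed isomorphisms of the full weak$^*$-closed restriction algebras (not just of dense subalgebras), which is automatic since unitary conjugation is a weak$^*$-homeomorphism of $B(L^2(\bR))$. I expect no genuine obstacle; the only point requiring mild care is ensuring, in case (c), that the chain of identifications $\A_p|_{K_s}\cong \A_{sp}|_{H^2(\bR)}\cong \A_{sp}^{\F}|_{L^2(\bR_+)}$ passes cleanly to the weak$^*$-closures, which follows because each step is implemented by a fixed unitary.
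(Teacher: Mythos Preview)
Your proposal is correct and follows exactly the approach of the paper, which presents the theorem simply as the combination of Lemma~\ref{l:intermediate}, Lemma~\ref{l:weakstardense1}, Proposition~\ref{p:weakstardense2}, and Lemma~\ref{l:weakstardense2} (together with the conjugation calculation preceding it). One minor slip: the identity from the proof of Theorem~\ref{t:conealgebras} is $M_{\phi_s}M_{s\mu}D_\mu M_{\phi_s}^{\ast}=e^{-is\mu^2/2}D_\mu$, so you have $M_{\phi_s}$ and $M_{\phi_s}^{\ast}$ interchanged, but this does not affect the argument since the conjugation direction you actually use is the correct one.
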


\section{Quasicompact algebras}\label{s:quasicompact}

%{\color{red}being reviewed} 
Let $\A$ be a weak$^\ast$-closed operator algebra on the Hilbert space $\H$ and $\K=\K(\H)$  the ideal of the compact operators. Define the \emph{quasicompact algebra} of $\A$ to be the C*-algebra $Q\A$ where
\begin{align*}
Q\A=\left(\A+ \K\right)
\cap\left(\A^\ast + \K\right).
\end{align*}
 Analogous algebras have been studied systematically in the theory of function spaces, the principal example being the nonseparable algebra of quasicontinuous functions,
\begin{align*} 
QC(\mathbb{T})=\left(H^\infty(\mathbb{T})+ C(\mathbb{T})\right)\cap \left(\overline{H^\infty(\mathbb{T})} + C(\mathbb{T})\right),
\end{align*}
where
$C(\mathbb{T})$ is the algebra of continuous functions on the unit circle (see \cite{dou}). 
%[CHECK CONSISTENCY H infinity notation]
Determining the structure of $Q\A$ and whether it differs from $\A\cap\A^\ast+ \K$ 
seems to be a rather deep problem in general. However for $\A=\A_v$ the following is well-known.

\begin{thm}\label{t:quasicompctvolterra}
The quasicompact algebra Q$\A_v$ is not equal to $\A_v\cap\A_v^\ast+\K$.
\end{thm}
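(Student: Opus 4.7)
The diagonal $\A_v \cap \A_v^\ast$ coincides with the masa $M_{L^\infty(\bR)}$: an operator leaving each subspace $L^2([t,\infty))$ invariant together with its orthogonal complement commutes with every projection $M_{\chi_{[t,\infty)}}$, and so with the von Neumann algebra they generate, which is $M_{L^\infty(\bR)}$; the converse inclusion is immediate. Thus $\A_v \cap \A_v^\ast + \K = M_{L^\infty(\bR)} + \K$, and the theorem reduces to exhibiting an operator $T \in Q\A_v$ which is not of the form $M_\phi + K$ for any $\phi \in L^\infty(\bR)$ and compact $K$.

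The strategy invokes a classical feature of continuous nests: the triangular truncation is unbounded on $B(L^2(\bR))$ with respect to $\N_v$, and consequently there exists a compact operator $K_0 \in \K(L^2(\bR))$ which admits a bounded triangular decomposition $K_0 = A_0 + B_0$ with $A_0 \in \A_v$ and $B_0 \in \A_v^\ast$, but no decomposition into a sum of an element of $\A_v \cap \K$ and an element of $\A_v^\ast \cap \K$. (This is the operator-theoretic Gohberg--Krein phenomenon; see Davidson's monograph on nest algebras.) The idea is that the natural candidate $T = A_0$ lies in $Q\A_v$ but cannot be well-approximated by a multiplication operator modulo compacts without inducing a forbidden decomposition of $K_0$.

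Concretely, given such a $K_0$, set $T := A_0$. Then $T \in \A_v \subseteq \A_v + \K$ and $T - K_0 = -B_0 \in \A_v^\ast \subseteq \A_v^\ast + \K$, so $T \in Q\A_v$. Suppose for contradiction that $T = M_\phi + K'$ with $\phi \in L^\infty(\bR)$ and $K' \in \K$. Since $T \in \A_v$ and $M_\phi \in \A_v$, we obtain $K' = T - M_\phi \in \A_v \cap \K$. Moreover, using $K_0 = T + B_0$,
\[
K_0 - K' \,=\, (T + B_0) - (T - M_\phi) \,=\, M_\phi + B_0 \,\in\, \A_v^\ast,
\]
since $M_\phi \in M_{L^\infty(\bR)} \subseteq \A_v^\ast$ and $B_0 \in \A_v^\ast$, and $K_0 - K'$ is compact as a difference of compacts. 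Hence the decomposition $K_0 = K' + (K_0 - K')$ has $K' \in \A_v \cap \K$ and $K_0 - K' \in \A_v^\ast \cap \K$, contradicting the defining property of $K_0$. Therefore $T \in Q\A_v \setminus (M_{L^\infty(\bR)} + \K)$, as required.

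The main obstacle is the existence of the compact operator $K_0$ with no compact triangular decomposition; the remainder of the argument is a short algebraic manipulation. An explicit construction requires care, and can be realised by exhibiting a suitable Hilbert--Schmidt-adjacent integral operator with a kernel concentrated near the diagonal of $\bR^2$ and tuned so that truncation against $\chi_{x \geq y}$ produces an operator that is bounded but not compact relative to the continuous nest $\N_v$.
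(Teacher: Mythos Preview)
Your argument is correct, and the algebraic manipulation showing that $T=A_0\notin M_{L^\infty(\bR)}+\K$ once $K_0$ exists is clean and complete. However, the route differs from the paper's only in packaging, not in substance.

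The paper does not give a formal proof but a sketch: one builds unit-norm finite rank operators $A_k\in\A_v$ with mutually orthogonal domains and ranges and with $\|A_k-A_k^*\|\to 0$ (this is where the unboundedness of triangular truncation enters), sets $A=\sum_k A_k$, and observes that $A\in\A_v$ is bounded and noncompact while $A-A^*=\sum_k(A_k-A_k^*)$ is a norm-convergent sum of finite rank operators, hence compact. One then checks directly that $A\notin M_{L^\infty(\bR)}+\K$. Note that your black-box $K_0$ is precisely this $A-A^*$: it is compact, has the bounded triangular decomposition $K_0=A+(-A^*)$, and your contradiction argument shows that the nonexistence of a compact triangular decomposition for $K_0$ is \emph{equivalent} to $A\notin M_{L^\infty(\bR)}+\K$. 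So the ``main obstacle'' you defer to the Gohberg--Krein literature is exactly the construction the paper outlines; the two arguments are the same result viewed from opposite ends.

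What each approach buys: the paper's sketch is self-contained and constructive, and it is the template that is then adapted (with genuine additional work) to the parabolic algebra in Theorem~\ref{Jesussea}. Your version has the merit of isolating a crisp sufficient condition (a compact operator with bounded but not compact triangular parts) and a two-line deduction from it; but since the paper's immediate purpose is to motivate the harder $\A_p$ argument, the explicit construction is more useful here than the abstraction.
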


 The proof of this theorem has two main ingredients. The first of these is that the triangular truncation operator with respect to the Volterra nest,
\begin{align*}
\mathcal{P}_v:\mathcal{C}_2(L^2(\bR))\rightarrow \A_v \cap\mathcal{C}_2(L^2(\bR)).
\end{align*}
is a contractive projection in the space $\mathcal{C}_2(L^2(\bR))$ of Hilbert-Schmidt operators which is  an unbounded operator with respect to the operator norm (see \cite{dav}). The second ingredient is to use this fact to create unit norm finite rank operators $A_k$ in $\A_v$, with orthogonal domains and ranges and with operator norms $\|A_k-A_k^*\|$ tending to zero. 
Then the infinite sum $A$ of the $A_k$ is a compact perturbation of $A^*$ which, furthermore, does not belong to $M_{L^\infty(\bR)}+\K$. 

In the proof of Theorem \ref{Jesussea} we adopt a similar strategy. However, in $\A_p$ there are no finite rank operators and we must make use of compact operators for the $A_k$. Also the orthogonality of domains and ranges must be replaced by an approximate form of this.
%We omit the proof Theorem \ref{t:quasicompctvolterra} since it is a simplification of the proof of the main result of this section, Theorem \ref{Jesussea}, which asserts that $Q\A_p \neq \bC+\K$. 
%For this 

% We make use of the following 3 lemmas.

%We omit the proof which is a simplification of that for Theorem \ref{Jesussea}. below, with some simplifications.

%In this section, we study the quasicompact algebra of $\A_p$. Theorem \ref{Rudinquasi} yields that $Q\A_{p}$ is a C$^\ast$-algebra. Our goal is to answer the following problem :
 
% \begin{center}
%  Is the quasicompact algebra of $\A_{p}$ strictly larger than $\A_{p}\cap \A_{p}^\ast +\K=\bC I+\K$?
%  \end{center}
  
%First, we need to introduce some new notation :
%\begin{align*}
%\A_{av}=\textrm{WOT-alg}\{M_\lambda,D_\mu\,:\,\lambda, %\mu\geq 0\}=\A_{FB}\\ 
%\A_{a^\ast v}
%=\textrm{WOT-alg}\{M_\lambda,D_\mu\,:\,\lambda\leq0, \mu\geq 0\}\\
%\A_{av^\ast}
%=\textrm{WOT-alg}\{M_\lambda,D_\mu\,:\,\lambda\geq0, \mu\leq 0\}\\
%\A_{a^\ast v^\ast}
%=\textrm{WOT-alg}\{M_\lambda,D_\mu\,:\,\lambda\geq0, \mu\geq 0\}
%\end{align*}

\begin{lemma}\label{triantrun}
The restriction of the triangular truncation operator $\mathcal{P}_v\big|_{\A_{p}+\A_{p}^\ast}$ is unbounded.
\end{lemma}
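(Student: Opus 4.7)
The plan is to construct operators $A_n\in\A_p$ whose symmetrizations $A_n+A_n^\ast\in\A_p+\A_p^\ast$ have uniformly bounded operator norm while $\mathcal{P}_v(A_n+A_n^\ast)=A_n$ has operator norm tending to infinity. The guiding intuition is that $A_n$ should be ``almost anti-self-adjoint'', mirroring the classical $L^\infty$-unboundedness of the conjugate function. Explicitly, set
\[ A_n=\sum_{k=1}^n\frac{i}{k}D_k. \]
Each $D_k$ lies in $\A_p$, so $A_n\in\A_p$.

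Under the Fourier-Plancherel transform $D_k$ becomes multiplication by $e^{-ik\xi}$, so $A_n$ is unitarily equivalent to multiplication by the trigonometric polynomial $\Phi_n(\xi)=\sum_{k=1}^n(i/k)e^{-ik\xi}$. At $\xi=0$ one has $\Phi_n(0)=i\sum_{k=1}^n 1/k$, so $\|A_n\|=\|\Phi_n\|_\infty\to\infty$ like the harmonic series. The symmetrization $A_n+A_n^\ast$ has Fourier symbol $\Phi_n+\overline{\Phi_n}=2\sum_{k=1}^n\sin(k\xi)/k$, whose $L^\infty$ norm is uniformly bounded in $n$ by the classical estimate on partial sums of the sawtooth Fourier series (derivable by summation by parts applied to the Dirichlet kernel), so $\sup_n\|A_n+A_n^\ast\|<\infty$. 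For the triangular truncation, each $D_k$ with $k\geq 0$ already lies in $\A_v$ so $\mathcal{P}_v(D_k)=D_k$, while each $D_{-k}$ with $k>0$ is strictly upper triangular (its kernel is supported on the line $y=x+k$ above the diagonal) and is annihilated by $\mathcal{P}_v$. Therefore $\mathcal{P}_v(A_n+A_n^\ast)=A_n$ and the ratio $\|\mathcal{P}_v(A_n+A_n^\ast)\|/\|A_n+A_n^\ast\|=\|A_n\|/\|A_n+A_n^\ast\|$ diverges, giving the unboundedness claim.

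The principal subtlety concerns the domain of $\mathcal{P}_v$: the operators $A_n$ are sums of unitaries, not Hilbert-Schmidt. However $\mathcal{P}_v$ extends unambiguously from $\mathcal{C}_2$ to the larger space $\A_v+\A_v^\ast\supseteq\A_p+\A_p^\ast$ via $\mathcal{P}_v(A+B^\ast)=A+E(B^\ast)$, where $E$ is the conditional expectation onto $\A_v\cap\A_v^\ast=M_{L^\infty(\bR)}$, and the above computation respects this extension since $E(D_{-k})=0$ for $k>0$. If one prefers to stay strictly within the Hilbert-Schmidt framework, one can replace the shifts $D_k$ by HS approximants of the form $M_h\Delta_{\psi_k}$ with $h\in H^2\cap H^\infty$ fixed and $\psi_k\in L^1\cap L^2(\bR_+)$ concentrated near $t=k$; this complicates the upper bound for $\|T_n+T_n^\ast\|$ through commutator terms $[M_h,\Delta_{\psi_k}^\ast]$, and these commutators controlled via the decay of $h$ at infinity form the main technical obstacle in that formulation, but the divergence of the ratio is preserved.
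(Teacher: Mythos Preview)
Your argument and the paper's share the same analytic core: the classical fact that $p_n(z)=c_n\sum_{k=1}^n\frac{1}{k}z^k$, normalised to sup norm $1$, satisfies $\|p_n-\overline{p_n}\|_\infty\to 0$. The paper likewise evaluates $p_n$ at a unitary $Z\in\A_p$ with full spectrum (it suggests $Z=M_1$; your choice amounts to $Z=D_1$, which works equally well).

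The execution diverges at the Hilbert--Schmidt step. In the paper $\mathcal{P}_v$ is defined only on $\mathcal{C}_2$, so the lemma concerns $\mathcal{P}_v$ on $(\A_p+\A_p^*)\cap\mathcal{C}_2$, and the proof stays there: one replaces $p_n(Z)$ by $p_n(ZF_n)$, where $(F_n)$ is a bounded HS approximate identity in the unit ball of $\A_p$. Then $p_n(ZF_n)\in\A_p\cap\mathcal{C}_2$, so $\mathcal{P}_v(p_n(ZF_n)^*)=0$ because $\A_v^*\cap\mathcal{C}_2$ and $\A_v\cap\mathcal{C}_2$ are HS-orthogonal for the continuous Volterra nest; the upper bound $\|p_n(ZF_n)-p_n(ZF_n)^*\|\le\|p_n-\overline{p_n}\|_\infty$ follows cleanly from the dilation form of von Neumann's inequality for a single contraction, and a diagonal choice of $(F_n)$ preserves $\|p_n(ZF_n)\|>1/2$. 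Your route instead extends $\mathcal{P}_v$ to $\A_v+\A_v^*$ via the diagonal expectation; this extension is well defined, but it shows unboundedness of the \emph{extended} map rather than of the restriction to $(\A_p+\A_p^*)\cap\mathcal{C}_2$, and the latter does not follow formally from the former. Your sketched HS alternative (replacing $D_k$ by $M_h\Delta_{\psi_k}$) heads in the right direction but, as you note, the commutator bookkeeping is heavier than the paper's single-contraction device. It is also worth noting that the specific HS operators $p_n(ZF_n)$ are reused verbatim in the proof of the subsequent theorem on $Q\A_p$, so the HS construction is not incidental in the paper.
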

\begin{proof}
Let $p_n$ be a real coefficient polynomial on $\mathbb{T}$ with supremum norm 1, such that the polynomials $f_n(z)=p_n(z)-\overline{p_n(z)}$
 satisfy the property $\|f_n\|_\infty \rightarrow 0$. For example, take
 \begin{align*}
 p_n(z)=c_n\sum_{k=1}^{n}\frac{1}{k}z^k.
 \end{align*}
 for appropriate constants $c_n$.
Let $Z$ be a unitary operator in $\A_{p}$ with full spectrum, such as $M_1$. Take $(F_n)$ to be a bounded approximate identity of Hilbert-Schmidt operators in the unit ball of $\A_{p}$. Then $F_n^\ast$ is also a bounded approximate identity on the space of Hilbert-Schmidt operators in $\A_{p}^\ast$. 

By the functional calculus $\|p_n(Z)\|=\|p_n\|= 1$ and there  exists a sequence $(\xi_n)$ in the unit sphere of $L^2(\bR)$, such that $\|p_n(Z)\xi_n\|>2/3$, for every $n\in\bN$. Fix $n\in\bN$.  Then   $p_n(ZF_m)\xi_n\rightarrow p_n(Z)\xi_n$ as $m\to \infty$. Choose inductively a subsequence $(F_{m_n})$,  denoted  $(F_n)$, such that
\begin{align*}
\|p_n(ZF_n)\xi_n\|>1/2.
\end{align*}
Since $p_n(ZF_n)$ belongs to $\A_{p}$,  we have $\langle K, p_n(ZF_n)^\ast\rangle_{H-S}=0$, for every Hilbert-Schmidt operator $K\in\A_v$ and $n\in\bN$. Thus
\begin{align*}
\|\mathcal{P}_v(p_n(ZF_n)-p_n(ZF_n)^\ast)\|=&
\|\mathcal{P}_v(p_n(ZF_n))-\mathcal{P}_v(p_n(ZF_n)^\ast)\|=\\&=
\|p_n(ZF_n)\|\geq\|p_n(ZF_n)\xi_n\|>1/2.
\end{align*}

On the other hand, since $ZF_n$ is a contraction for every $n\in\bN$, von Neumann's inequality
%(\cite{Pau02}) 
yields
\begin{align*}
\|p(ZF_n)+q(ZF_n)^\ast\|\leq \|p+\overline{q}\|
\end{align*}
for all polynomials $p,q$ in the disc algebra. Taking $p=p_n$ and $q=-p_n$, it follows 
\begin{align*}
\|p_n(ZF_n)-p_n(ZF_n)^\ast\|\leq \|p_n-\overline{p_n}\|\rightarrow 0,
\end{align*}
which completes the proof.
\end{proof}

\begin{thm}\label{Jesussea}
The quasicompact algebra $Q\A_{p}$ is strictly larger than the algebra\\ $\A_{p}\cap \A_{p}^\ast +\K=\bC I+\K$.
\end{thm}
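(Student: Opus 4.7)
I will adapt the blueprint of Theorem \ref{t:quasicompctvolterra}, substituting the compact operators produced by Lemma \ref{triantrun} for the finite rank operators of the Volterra case and replacing exact orthogonality of domains and ranges with a diagonal translation scheme. Write $A_n := p_n(ZF_n)$ for the compact operators in $\A_p$ from the proof of Lemma \ref{triantrun}, which satisfy $\|A_n\| > 1/2$ and $\|A_n - A_n^*\| \to 0$; after passing to a subsequence we may assume $\|A_n - A_n^*\| \leq 4^{-n}$. Since the numerical radius satisfies $w(A_n) \geq \|A_n\|/2 > 1/4$, I pick unit vectors $\xi_n$ with $\tau_n := \langle A_n \xi_n, \xi_n\rangle$ of modulus at least $1/4$, and a further subsequence extraction yields $\tau_n \to \tau$ with $|\tau| \geq 1/4$. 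Approximating $A_n$ in operator norm by a finite rank operator whose singular vectors are compactly supported allows us to further assume that $\xi_n$ itself is supported in a bounded interval $J_n$.

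Because $\mathrm{Ad}_{D_\mu}$ is an automorphism of $\A_p$, the conjugates $B_n := D_{\mu_n} A_n D_{-\mu_n}$ lie in $\A_p$ and inherit $\|B_n\| = \|A_n\|$ and $\|B_n - B_n^*\| = \|A_n - A_n^*\|$ for any real $\mu_n$. I pick $\mu_n \uparrow \infty$ inductively and rapidly enough that (i) the intervals $J_n + \mu_n$ and the $L^2$-bulks of the diagonally shifted Hilbert--Schmidt kernels of $A_n$ are pairwise disjoint with unboundedly large gaps, and (ii) $\|B_m^* B_n\| + \|B_m B_n^*\| \leq 4^{-\max(m,n)}$ for all $m \neq n$. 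Item (ii) is attainable because each $A_n$ has an $L^2(\bR^2)$ Hilbert--Schmidt kernel, and sufficiently large diagonal translation drives the cross-kernel overlaps to zero. The Cotlar--Stein lemma then shows that the partial sums $\sum_{n \leq N} B_n$ converge in the strong operator topology to a bounded operator $A \in \A_p$ (using WOT-closure of $\A_p$), and $A - A^* = \sum_n(B_n - B_n^*)$ converges in operator norm to a compact operator, since each summand is compact and $\sum_n \|B_n - B_n^*\| < \infty$. Therefore $A^* \in \A_p + \K$, and so $A \in Q\A_p$.

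To show $A \notin \bC I + \K$, I use the essential numerical range. If $A = \lambda I + K$ with $K$ compact, then $\langle A\phi_n, \phi_n\rangle \to \lambda$ for every weakly null unit sequence $\phi_n$, so $W_e(A) = \{\lambda\}$. I display two weakly null unit sequences with distinct diagonal limits. First, take $\eta_n := D_{\mu_n}\xi_n$: the localization of $\xi_n$ together with $\mu_n \to \infty$ gives $\eta_n \rightharpoonup 0$, while the identity $\langle B_n \eta_n, \eta_n\rangle = \tau_n$, combined with the smallness of the cross terms $\langle B_m \eta_n, \eta_n\rangle = \langle A_m D_{\mu_n - \mu_m}\xi_n, D_{\mu_n - \mu_m}\xi_n\rangle$ for $m \neq n$ (because $D_{\mu_n - \mu_m}\xi_n$ is supported far from the effective support of $A_m$), yields $\langle A\eta_n, \eta_n\rangle \to \tau \neq 0$. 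Second, let $\eta_n'$ be the normalized indicator of an interval $I_n'$ of diverging length lying in a gap between consecutive translated supports; then $\eta_n' \rightharpoonup 0$, and the Hilbert--Schmidt kernel-tail estimates on $A_m$ give a summable-in-$m$ dominant for $\|B_m \eta_n'\|$ together with pointwise decay $\|B_m \eta_n'\| \to 0$ as $n \to \infty$ for each fixed $m$. By dominated convergence $\sum_m \|B_m \eta_n'\| \to 0$, hence $\|A\eta_n'\| \to 0$ and $\langle A\eta_n', \eta_n'\rangle \to 0$. So $\{0, \tau\} \subseteq W_e(A)$, contradicting $W_e(A) = \{\lambda\}$.

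The main obstacle is the coordinated quantitative choice of the translates $\mu_n$, the ``effective support'' radii of the Hilbert--Schmidt kernels of $A_n$, and the supports $J_n$ of $\xi_n$: these must simultaneously deliver the Cotlar--Stein bounds, the geometric disjointness of the translated kernel supports with growing gaps, the weak nullity of both probe sequences, and the summable kernel-tail bounds used in the two diagonal-value computations. This $L^2$-tail bookkeeping is the substantive new ingredient replacing the clean orthogonality available in the Volterra case.
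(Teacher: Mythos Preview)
Your construction is structurally the same as the paper's: start from the compact operators $p_n(ZF_n)\in\A_p$ produced by Lemma~\ref{triantrun}, conjugate by translations $D_{\mu_n}$ to achieve approximate orthogonality, sum to obtain $A\in\A_p$ with $A-A^*$ compact, and then separate $A$ from $\bC I+\K$. The paper carries out the boundedness and SOT-convergence of $\sum B_n$ by bare-hands $L^2$ estimates (its Claims~1 and~2, using explicit ``effective support'' inequalities of the type~\eqref{compest}--\eqref{compest4}), and it rules out $A\in\bC I+\K$ by the projection trick: $P_{\Lambda_0}A$ is a norm-convergent sum of compacts, hence compact, forcing the scalar part to vanish. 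Your substitutions --- Cotlar--Stein for boundedness and the essential numerical range for the separation step --- are legitimate alternatives; the numerical-range argument in particular is a nice replacement for the paper's projection device and works once the inductive choice of $\mu_n$ controls the finitely many cross terms $\langle B_m\eta_n,\eta_n\rangle$ at each stage.

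One point to tighten: Cotlar--Stein, in its usual form, gives only a uniform bound on $\bigl\|\sum_{j\in F}B_j\bigr\|$ over finite $F$; it does \emph{not} by itself yield SOT-convergence of the partial sums (the diagonal contributions $\|B_m\|^2$ do not decay). You still need an argument like the paper's Claim~1, or the observation that for compactly supported $f$ the tail $\|B_m f\|$ is eventually dominated by $2^{-m}\|f\|$ (via your kernel-tail estimates), which together with uniform boundedness and density gives SOT-convergence on all of $L^2(\bR)$. With that adjustment your outline goes through, and the ``coordinated bookkeeping'' you flag at the end is exactly the content of the paper's inequalities~\eqref{compest}--\eqref{compest4} and the disjoint placement of the translated intervals $\Lambda_n$.
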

\begin{proof}
Recall the operators $p_n(ZF_n)$ from the proof of Lemma \ref{triantrun}, where $Z\in \A_p$ is unitary with spectrum the unit circle, $(F_n)$ is the bounded approximate identity in $\A_p$ of Hilbert-Schmidt operators, and $p_n(z)$ are polynomials in $C(\bT)$, of unit $\|\cdot \|_\infty$-norm, with real parts converging uniformly to
zero. Also we have $1/2\leq\|p_n(ZF_n)\|\leq 1$.

 Since these operators are compact, there exist compact intervals $K_n$ such that for any  $f\in L^2(\bR)$,
\begin{equation}\label{compest}
\|p_n(ZF_n)f\|\leq \|P_{K_n} f\|+ \frac{1}{2^n}\|f\|
\end{equation}
and
\begin{equation}\label{compest2}
\|P_{\bR\backslash K_n}p_n(ZF_n)f\|\leq  \frac{1}{2^n}\|f\|,
\end{equation}
where $P_{K_n}$ is the projection on $L^2(K_n)$. We can also arrange that for all $n$,
\begin{equation}\label{compest3}
\|\left(p_n(ZF_n)-p_n(ZF_n)^\ast\right)f\|\leq \|p_n(ZF_n)-p_n(ZF_n)^\ast\|\,\left(\|P_{K_n} f\|+ \frac{1}{2^n}\|f\|\right)
\end{equation}
and
\begin{equation}\label{compest4}
\|P_{\bR\backslash K_n}\left(p_n(ZF_n)-p_n(ZF_n)^\ast\right)f\|\leq  \frac{1}{2^n}
 \|p_n(ZF_n)-p_n(ZF_n)^\ast\|\,\|f\|.
\end{equation}
  
We now choose $(t_n)$ to be an increasing sequence so that the translates  $\Lambda_n=K_n+t_n$
%:=\{x_n+t_n\,:\, x_n\in K_n\}$ 
are disjoint sets, with
$\max\Lambda_n<\min\Lambda_{n+1}$ for all $n$.
We also write $\Lambda_0=\bR\backslash \cup_{n=1}^\infty \Lambda_n$. Since the projection of triangular truncation with respect to the binest commutes with $Ad_{D_t}$, it follows that the operators
\begin{align*}
A_n=D_{t_n}(p_n(ZF_n))D_{t_n}^\ast
\end{align*}
lie in $\A_{p}$. 

\noindent \underline{Claim 1:}\, Given $f\in L^2(\bR)$, the sequence $(\sum\limits_{k=1}^{n}A_k f)_{n}$ is convergent.
 
To prove this claim, it suffices to show that the given sequence is a Cauchy sequence. 
%Let $\epsilon>0$, we need to configure $n_0=n_0(\epsilon,f)\in\bN$ such that for every $n,N>n_0$ we have $\|\sum\limits_{k=n}^{N}A_k f\|\leq \epsilon$.  
Let $C$ be the compact set $\cup_{m=n}^N \Lambda_m$.
Then 
\begin{align*}
\bigg\|\sum_{k=n}^N A_k f\bigg\|^2=
\int_{\bR\backslash C} \bigg|\sum_{k=n}^N A_k f\bigg|^2 +
\int_C \bigg|\sum_{k=n}^N A_k f\bigg|^2.
\end{align*}
Estimating the integrals we have 
\begin{align*}
%\bullet 
 \int_{\bR\backslash C} \bigg|\sum_{k=n}^N A_k f\bigg|^2&=
\bigg\| P_{\bR\backslash C} \sum_{k=n}^N A_k f\bigg\|^2 \leq
\left(\sum_{k=n}^N \| P_{\bR\backslash C}A_k f\|\right)^2\leq
\left(\sum_{k=n}^N \frac{1}{2^k} \|f\|\right)^2 =
\left(\sum_{k=n}^N \frac{1}{2^k}\right)^2 \|f\|^2.
\end{align*}
Also,
\begin{align*}
%\bullet 
\,\,\,\,\,\,\int_{C} \bigg|\sum_{k=n}^N A_k f\bigg|^2&=
\int_{\bR} \bigg|\sum_{m=n}^N P_{\Lambda_m}\sum_{k=n}^N A_k f\bigg|^2\leq
 2 \int_{\bR} \bigg|\sum_{m=n}^N P_{\Lambda_m} A_m f\bigg|^2 +
2 \int_{\bR} \bigg|\sum_{m=n}^N \sum_{\substack{k=n\\k\neq m}}^N  P_{\Lambda_m} A_k f\bigg|^2.
\end{align*}
The first  term here gives 

\begin{align*}
\int_{\bR} \bigg|\sum_{m=n}^N P_{\Lambda_m} A_m f\bigg|^2=
\sum_{m=n}^N \|P_{\Lambda_m} A_m f\|^2 &\leq \sum_{m=n}^N ( \|P_{\Lambda_m} f\| +\frac{1}{2^m}\|f\|)^2
\\&\leq  \sum_{m=n}^N \left( \|P_{\Lambda_m} f\|^2 + \left(\frac{2}{2^m} + \frac{1}{2^{2m}}\right)\|f\|^2\right)\\
&\leq  \|P_{C} f\|^2 + \left(\sum_{m=n}^N \left(\frac{2}{2^m} + \frac{1}{2^{2m}}\right) \right) \|f\|^2.
\end{align*}
Note that for every $\epsilon_1>0$,  we can choose $n_0$ big enough so that 
$\|P_{A} f\|\leq \epsilon_1\|f\|$, where $A=\cup_{m=n_0}^\infty \Lambda_m$.
 
For the second term, it follows by relation \eqref{compest2} that
\begin{align*}
\int_{\bR} \bigg|\sum_{m=n}^N \sum_{\substack{k=n\\k\neq m}}^N  P_{\Lambda_m} A_k f\bigg|^2&=
\bigg\| \sum_{m=n}^N \sum_{\substack{k=n\\k\neq m}}^N  P_{\Lambda_m} A_k f\bigg\|^2=
\bigg\| \sum_{k=n}^N \sum_{\substack{m=n\\m\neq k}}^N  P_{\Lambda_m} A_k f\bigg\|^2\\&=
\bigg\| \sum_{k=n}^N   P_{C\backslash\Lambda_k} A_k f\bigg\|^2\leq 
\left( \sum_{k=n}^N  \| P_{C\backslash\Lambda_k} A_k f\|\right)^2 \\
&\leq 
\left( \sum_{k=n}^N \frac{1}{2^k}\|f\|\right)^2 = \left( \sum_{k=n}^N \frac{1}{2^k}\right)^2
\|f\|^2.
\end{align*}

Combining the above estimates we get
\begin{align*}
\bigg\|\sum\limits_{k=n}^N A_k f\bigg\|^2\leq \left(3 \left(\sum_{k=n}^N \frac{1}{2^k}\right)^2 
+2 \sum_{m=n}^N \left(\frac{2}{2^m} + \frac{1}{2^{2m}}\right)+ 2\epsilon_1^2\right) \|f\|^2.
\end{align*}
Hence, there exists $n_0\in \bN$ such that $\|\sum\limits_{k=n}^N A_k f\|^2\leq \epsilon$, 
for all $n,N>n_0$, proving the first claim. 

\noindent \underline{Claim 2:} The sequence  $\{\sum\limits_{k=1}^{n}A_k\}_{n}$ is uniformly bounded.

Let $f\in L^2(\bR)$. Then 
\begin{align*}
\bigg\|\sum_{k=1}^n A_k f\bigg\|^2&= \lim\limits_{M\rightarrow \infty}
\bigg\|\sum_{m=0}^M P_{\Lambda_m}\sum_{k=1}^n A_k f\bigg\|^2.
\end{align*}
Also
\begin{align*}
\bigg\|\sum_{m=0}^M P_{\Lambda_m}\sum_{k=1}^n A_k f\bigg\|^2\leq
2\bigg\|\sum_{k=1}^n P_{\Lambda_k}A_k f\bigg\|^2+2 \bigg\|
\sum_{k=1}^n\sum_{\substack{m=0 \\ m \neq k}}^M P_{\Lambda_m} A_k f\bigg\|^2. 
\end{align*}
Applying now the relation \eqref{compest} we obtain
\begin{align*}
 \bigg\|\sum_{k=1}^n P_{\Lambda_k}A_k f\bigg\|^2&\leq 
\sum_{k=1}^n \left(\|P_{\Lambda_k}f\|+ \frac{1}{2^k}\|f\|\right)^2\\&=
\sum_{k=1}^n \|P_{\Lambda_k}f\|^2+ \sum_{k=1}^n \left(\frac{2}{2^k}+\frac{1}{2^{2k}}\right)\|f\|^2\leq 4\|f\|^2.
\end{align*}
Moreover
\begin{align*}
\bigg\|
\sum_{k=1}^n\sum_{\substack{m=0 \\ m \neq k}}^M P_{\Lambda_m} A_k f\bigg\|^2\leq
\left(\sum_{k=1}^n\bigg\|\sum_{\substack{m=0 \\ m \neq k}}^M P_{\Lambda_m} A_kf\bigg\|\right)^2\leq&
\left(\sum_{k=1}^{n}\| P_{\bR\backslash \Lambda_k} A_k f\|\right)^2\leq
\left(\sum_{k=1}^{n}\frac{1}{2^k}\| f\|\right)^2\\&= \left(\sum_{k=1}^{n}\frac{1}{2^k}\right)^2\| f\|^2\leq
\|f\|^2.
\end{align*}
Hence the norms $\|\sum\limits_{k=1}^n A_k\|$ are indeed uniformly bounded, as required. 

Let $M=\sup_n \|\sum\limits_{k=1}^n A_k\|<\infty$ and define the linear operator $A$ acting on $L^2(\bR)$ by the formula 
\begin{align*}
Af= \lim_n \sum_{k=1}^{n} A_k f.
\end{align*}
By the first claim  $A$ is well-defined and  
%One can check that it is also linear by routine calculations. 
by the second claim, $\|Af\|\leq M\|f\|$, for $f\in L^2(\bR)$. Thus  $A$ is a bounded operator equal to the SOT-limit of the sequence
$\{\sum\limits_{k=1}^{n}A_k\}_{n}$, and so lies in $\A_p$. Note also that $A$ is not a compact operator since $\|A_k\|\geq 1/2$ for all $k$.

Define now the Hilbert-Schmidt operators 
\begin{align*}
X_n=A_n-A_n^\ast=D_{t_n}(p_n(ZF_n)-p_n(ZF_n)^\ast)D_{t_n}^\ast
\end{align*}
and note that $\|X_n\|\rightarrow 0$.
By the calculations above we see that the sequence of the partial sums of $\sum\limits_{n=1}^{\infty} X_n$ is Cauchy with respect to the operator norm and so the norm limit $X=\sum\limits_{n=1}^{\infty} X_n$ %lies in the norm closure of the Hilbert-Schmidt operators, which yields that $X$ 
is a compact operator  on $L^2(\bR)$.
Since involution is continuous in the weak operator topology, we see that $A-A^\ast=X$. Thus
%\begin{align*}
%A\in(\A_{p}+\K)\cap (\A_{p}^\ast+\K),
%\end{align*}
$A$ lies in the quasicompact algebra $Q\A_p$ and it remains to show that $A\notin \bC I+\K$.

Assume that this is not true. 
Since the algebra $\bC I+\K$ is norm closed we have $A=cI+K$ with $c\in\bC$ and $K\in \K$. Left multiply both sides by the projection $P_{\Lambda_0}$. Since multiplication is separately SOT-continuous we see that 
$P_{\Lambda_0} A$ is the SOT-limit of the operators $\{\sum\limits_{k=1}^{n}P_{\Lambda_0}A_k\}_{n}$. Moreover we have the estimates 
\begin{align*}
\|P_{\Lambda_0}A_k\|\leq \frac{1}{2^k}, \text{ for every } k\in\bN,
\end{align*}
so the above sequence converges uniformly to $P_{\Lambda_0} A$. Therefore $P_{\Lambda_0} A$ is a compact operator, and so $cP_{\Lambda_0} $ is also compact. 
However, ${\Lambda_0}$ is a union of nonempty intervals and so 
%Since we created the sets $\Lambda_n$ by one-sided shifts, $\Lambda_0$ is an unbounded set of infinite measure, and this yields that 
$c=0$. But this implies that $A$ is compact, which gives the desired contradiction.
\end{proof}

\begin{rem}
 We remark that Q$\A_p$, like $QC$ and also Q$\A_v$, is a nonseparable C*-algebra. Indeed one can determine a well-separated uncountable set by considering  operators of the form  $\sum_{k=1}^\infty b_kA_k$ where $(b_k)$ is a 0-1 sequence and $(A_k)$ is a sufficiently  approximately orthogonal sequence of unit norm operators in $\A_p$ as in the proof above.
\end{rem}
\begin{rem}
As we noted in Section \ref{s:parabolicalgebra}, the parabolic algebra $\A_p$ is equal to the intersection  $\A_v \cap \A_a$. It  seems plausible, given the proof above, that the \emph{essential parabolic algebra}  $\A_p/\K$ is a proper subalgebra of the intersection algebra
$(\A_v/\K)\cap (\A_a/\K)$ however we do not know if this is the case. This is equivalent to determining whether $(\A_v+\K)\cap (\A_a +\K)$ is strictly larger than $\A_p +\K$.
\end{rem}

\subsection{Further directions}\label{ss:openprobs}
%DONT USE In the case of the parabolic algebra the group of unitary automorphisms, $X \to AdU(X) = UXU^*$, was identified in \cite{kat-pow-1} as the $3$-dimensional  Lie group of automorphisms $Ad(M_\lambda D_\mu V_t)$ for $\lambda, \mu$ and $t$ in $\bR$.
The parabolic algebra sits between the very well-understood algebra
$H^\infty(\bR)$ and the equally well-understood Volterra nest algebra $\A_v$. Determining further algebraic and geometric properties of $\A_p$ is likely to require methods from both commutative and noncommutative perspectives. In particular, the following 3 problems seem to be rather deep requiring both perspectives.

%It would be interesting to understand the parabolic algebra more with regard to the following questions, particularly since 
%$M_{H^\infty(\bR)}\subset \A_p\subset Alg \N_v$ and the extremal algebras %here are well-understood in these respects. 
\medskip

{ Question 1.} \emph{How are the weakly closed ideals of $\A_p$ characterised ?} The weakly closed ideals of a nest algebra $\A$ are determined by left-continuous order homomorphisms from $Lat \A$ to $Lat \A$ (\cite{erd-pow}, \cite {dav}). Similar such boundary functions, from $Lat \A_p$ to $Lat \A_p$,  are likely to play a role in resolving this question.
\medskip

{ Question 2.} \emph{What is the Jacobson radical of $\A_p$ ?} In particular is there some kind of analogue of Ringrose's characterisation (\cite{rin}, \cite{dav}) ?
\medskip

{ Question 3.} \emph{Is there a variant of Arveson's distance formula for $\A_p$ ?} Arveson's distance formula (\cite{dav}, \cite{pow-distance})
for a nest algebra is closely related to distance formulae for $H^\infty(\bR)$ so in some ways this is a natural problem for $\A_p$. Also it would lead to the hyperreflexivity of $\A_p$, a property known for both $\A_v$ and for
$H^\infty(\bR)$ as an operator algebra on $H^2(\bR)$ (Davidson \cite{dav-distance}).

\medskip

{ Question 4.} \emph{Does $\A_p$ have zero divisors ?} We suspect that this intriguing question is less deep and indeed we believe that there are zero divisors even in the norm-closed algebra, $\B_p$ say, that is generated by the 2 semigroups. This algebra is analysable as a norm-closed semi-crossed product \cite{kas-normclosed} and elements admit natural generalised Fourier series. First let us remark that the unclosed algebra has {no} divisors of zero simply because operators in the algebra have generalised {finite} Fourier series of the form
\[
\sum_{\lambda_{k}\in \bR} A_kM_{\lambda_k}
\]
where each $A_k$ is a finite linear combination of the $D_\mu$ for $\mu\in \bR$.
Thus a product of nonzero elements has a nonzero first Fourier coefficient and so is also nonzero. On the other hand is it possible to construct, in some manner,  2 absolutely norm-convergent generalised Fourier series (with no first nonzero coefficient) such that the product is zero ? 
\medskip

\end{document}